\newtheorem{proposition}{Proposition}[section]
\newtheorem{definition}[proposition]{Definition}
\newtheorem{lemma}[proposition]{Lemma}
\newtheorem{theorem}{Theorem}[section]
\newtheorem*{theorem*}{Theorem}
\newdimen\AAdi%
\newbox\AAbo%
\def\AAk#1#2{\setbox\AAbo=\hbox{#2}\AAdi=\wd\AAbo\kern#1\AAdi{}}%
\def\AAr#1#2#3{\setbox\AAbo=\hbox{#2}\AAdi=\ht\AAbo\raise#1\AAdi\hbox{#3}}%
\newcommand {\CF}{{\mathcal F}}
\newcommand {\CR}{{\mathcal R}}
\newcommand{\disp}{\displaystyle}
\newcommand{\eps}{\varepsilon}
\newcommand{\8}{\infty}
\def\m1{{-1}}
\def\S{\Sigma}
\def\s{\sigma}
\newcommand{\ul}[1]{\underline{#1}}
\newcommand{\N}{\Bbb{N}}
\newcommand{\R}{\Bbb{R}}
\theoremstyle{definition}
\newtheorem{remark}{Remark}
\begin{document}

\title[Renormalization: some local and global properties]
{Renormalization for  a Class of Dynamical Systems: some Local and Global Properties }
\author{Alexandre Baraviera, Renaud Leplaideur and
Artur O. Lopes}
\date{\today}

\address{Instituto de Matem\'atica, UFRGS,
91509-900 Porto Alegre, Brasil. Partially supported by PRONEX --
Sistemas Din\^amicos, Instituto do Mil\^enio, and beneficiary of
CAPES financial support.}
\address{ D\'epartement
de math\'ematiques, UMR 6205, Universite de Bretagne Occidentale,
Cedex 29238, Brest Cedex 3, France. http://www.math.univ-brest.fr/perso/renaud.leplaideur/}
\address{Instituto de Matem\'atica, UFRGS, 91509-900
Porto Alegre, Brasil. Partially supported by CNPq, PRONEX --
Sistemas Din\^amicos, Instituto do Mil\^enio, and beneficiary of
CAPES financial support.}

\keywords{renormalization, the shift, non-uniformly hyperbolic
dynamics, Manneville-Pomeau map, symbolic dynamics}
\subjclass[2000]{27E05, 37E20, 37F25, 37D25 }

\maketitle
\begin{abstract}
We study the period doubling renormalization operator for dynamics which present two coupled laminar regimes with two weakly expanding fixed points. We focus our analysis on
 the potential point of view,  meaning we want to solve
$$V=\mathcal{R} (V):=V\circ f\circ h+V \circ h,$$ where $f$ and $h$ are naturally
defined. Under certain hypothesis we show the existence of a
explicit ``attracting'' fixed point $V^*$ for $\mathcal{R} $. We
call $\mathcal{R}$ the renormalization operator which acts on
potentials $V$. The log of the derivative of the main branch of the
Manneville-Pomeau map appears as a special ``attracting'' fixed
point for the local doubling period renormalization operator.

We also consider an analogous definition for the one-sided 2-full
shift $\S$ (and also for the two-sided shift) and we obtain a
similar result. Then, we consider global properties and we prove two
rigidity results. Up to some weak assumptions, we get the uniqueness
for the renormalization operator in the shift.

In the last section we show (via a certain continuous fraction
expansion) a natural relation of the two settings:  shift acting on
the Bernoulli space $\{0,1\}^\mathbb{N}$ and Manneville-Pomeau-like
map acting on an interval.

\end{abstract}

\section{Introduction}
\subsection{General presentation}
The period doubling renormalization operator was introduced by M.
Feigenbaum and by P. Coullet and C. Tresser (see \cite{CMMT} \cite{Feigenbaum1}
\cite{Feigenbaum2} \cite{Tr} \cite{TC})  for the study of
a certain class of one-dimensional dynamical systems. We recall
that for $f:[0,1]\hookleftarrow$, the renormalization of $f$ is
defined by
\begin{equation}\label{equ1-operenorm}
\mathcal{R}(f)(x)=h^{-1}\circ f^2\circ h(x),
\end{equation}
where $h$ is an affine map defined on $[0,1]$.

This operator $\mathcal{R}$  acts on dynamical transformations $f$.

A difficult problem is to find all the functions $f$ which solve the
equation $\mathcal{R}(f)=f$. In that direction, the renormalization
conjecture is that in the proper class of maps, the period doubling
renormalization operator has a unique fixed point which is
hyperbolic, with a one-dimensional unstable manifold, and  with a
codimension one stable manifold consisting of systems at the
transition to chaos (see \cite{CMMT} \cite{Co}).

The goal of this article is to present some investigations in view
to solve $\mathcal{R}(f)=f$ and to present some rigidity results.

Taking derivative in \eqref{equ1-operenorm}, and keeping in mind that $h$ is affine, we get
$$ f'(f\circ h(x))f'\circ h(x)=f'(x).$$
Then, taking the logarithm in this last equation and setting $V(x):=\log f'(x)$, we finally get
\begin{equation}\label{equ2-operenorm}
V(f(h(x)))+V(h(x))=V(x).
\end{equation}

Here we are interested in finding the solution $V$ of the above
equation.

 This way of studying the
renormalization operator is, in our view, more in the spirit of the
setting of Statistical Mechanics, where the renormalization is
looking for potentials and not looking for different dynamics (see
{\it e.g.} \cite{EFS}).

Our main motivation is chapter 5 of  the book \cite{Sch}, where the
renormalization is associated to the existence of a weakly expanding
fixed point. We look for the problems raised in \cite{Sch} but from
the point of view of potentials, not from their point of view (which
is in some sense purely  dynamical). We give rigorous mathematical
proofs. We point out that the purely dynamical problem is harder to
deal.

Unfortunately  the map $f$ still appears in \eqref{equ2-operenorm},
which is a real obstacle to solve the equation. In this article, we
are only interested by maps whose dynamics is conjugated to the full and
one-sided shift $\sigma$ acting on the Bernoulli space $\S$ with two
symbols. Indeed, one deep problem to solve the renormalization
conjecture is the huge number of different sorts of dynamical
systems one has to deal with. However, a partial solution can be
given when we assume some restrictions on the class of dynamics we
are studying. Here, we are  interested in Manneville-Pomeau like
maps: each map has 2 coupled laminar regimes, with two fixed and
weakly expanding points. These dynamics are canonically conjugated with $(\S,\s)$.

In that case, the lifted equation of
\eqref{equ2-operenorm} in $\S$ is on the form
\begin{equation}\label{equ3-operenorm}
V(\s (H(x)))+V(H(x))=V(x),
\end{equation}

where $H :\S \to \S$.
\medskip

In the case of the shift, we will consider a renormalization
operator $\mathcal{R}$ acting on potentials $V$:

given $V:\{0,1\}^\mathbb{N}\to \mathbb{R}$ in a certain natural
class of potentials $\mathcal{ F}_\gamma$, indexed by a real
parameter $\gamma$, for any $\,x:\,=\,\displaystyle
(\underbrace{0,...,0}_{c_1},\underbrace{1,...,1}_{c_2}\underbrace{0,...,0}_{c_3},1,...),$

\noindent we set

${\mathcal R}(V)(x)=
V((\underbrace{0,...,0}_{2c_1},\underbrace{1,...,1}_{c_2}\underbrace{0,...,0}_{c_3},1,...))+
V((\underbrace{0,...,0}_{2c_1+1},\underbrace{1,...,1}_{c_2}\underbrace{0,...,0}_{c_3},1,...)).$

\medskip\noindent
We show that the potential $V^*$ defined in the following way:
$V^*(x)=\gamma\, \log \frac{k+1}{k } $, for any $x$ in the cylinder
set $[\underbrace{000\dots 00}_{k} 1]$, $k \in \mathbb{N}$, is a
fixed point for $\mathcal{R}$. Moreover, for any $V\in
\mathcal{F}_\gamma$, we have that $\lim_{n\to \infty} \mathcal{ R}^n
(V) = V^*$
\medskip\noindent

One can ask if other kind of renormalization operators could be
considered (giving similar results).

\medskip\noindent
{\bf Theorem A.} {\it  For the one-side shift, there is a unique
renormalization operator (up to a constant parameter $a\in
\mathbb{N}$).}

\medskip\noindent
In this direction, we basically show that there exists a unique type
of maps $H:\S\hookleftarrow$, and a unique type of ``good''
potential $V$ which satisfy \eqref{equ3-operenorm}. From here, the
problem of solving \eqref{equ2-operenorm} lies in the study of good
projections from $\S$ onto $[0,1]$.

We also analyze a family of Manneville-Pomeau maps and a local
renormalization operator (defined in a different but similar way).

Each map of the one parameter family of Manneville-Pomeau like we
study has 2 coupled laminar regimes, with two fixed and weakly
expanding points. It is defined by
$$\left\{\begin{array}{l}
\disp f_t (x) = \frac{x}{( 1-x^t)^{1/t}} = (\frac{x^t}{1-x^t})^{1/t}, \,\mbox{ if } 0\leq x \leq \frac{1}{2^{1/t}},\\
\disp f_t(x)= (2- \frac{1}{x^t})^{1/t},\,\mbox{ if }\frac{1}{2^{1/t}}< x\leq 1,\\
\end{array}\right.$$
where  $t> 0$ is a real parameter.

Note that, for these maps, the renormalization makes sense only for
the basins of backward-attraction of the two weakly expanding fixed
points.

One important result in the setting of Manneville-Pomeau
transformation is:

\medskip\noindent
{\bf Theorem B.} {\it For each $t$, $f_{t}$ is an hyperbolic fixed
point for the doubling period renormalization operator (restricted
to each basin). The stable leaf $\CF_{t}$ is given by dynamics with
the same germs than $f_{t}$ in 0 (and in 1)}

\medskip

For each $t$ there is a unique $f_{t}$-invariant measure absolutely
continuous with respect to Lebesgue measure. In classical
(non)-uniformly hyperbolic dynamics, such a measure is referred as a
\emph{physical measure}. It is so, because one can actually ``see''
the convergence in Birkhoff averages for points. Now, it is
well-known that the nature (finite/infinite) of this invariant
measure is related and only depends on the nature of the germ of the
dynamics close to the fixed and weakly hyperbolic point (see
\cite{LSY}).

Finally, in the last section (which covers global aspects), we show
(via a certain continuous fraction expansion) a natural relation of
the two settings:  shift acting on the Bernoulli space
$\{0,1\}^\mathbb{N}$ and Manneville-Pomeau-like map acting on an
interval.

\subsection{Structure of the paper and results}

This paper is organized in the following way.

In Section \ref{sec-local} we study the local renormalization. In
Subsection \ref{subsec-pomeau} we show the fixed point property for
the renormalization operator associated to Manneville-Pomeau
transformations and also Theorem B. In Subsection
\ref{subsec-renorshift} we consider the one-sided shift and we
define there the natural renormalization operator with respect to
the class of dynamics we are considering. As a by-product we extend
the operator to the 2-side case in Subsection
\ref{subsec-renor2sideshift}, and then consider a kind of two
dimensional bijective Baker Manneville-Pomeau map in Subsection
\ref{subsec-cadobonux}.

In general terms, this section (which consider local properties) can
be considered as associated to the dynamics restricted to the  basin
of attraction (backward) of a weakly expanding fixed point. In the
shift, each different laminar regime (where the renormalization
operator acts) is associated to a different parameter $\gamma$. For
the Manneville-Pomeau map the parameter $t$ plays the role of the
$\gamma$. In Phase Transition Theory the correspondence of the two
settings (shift versus MP) is given by $\gamma=1 + \frac{1}{t}$ (in
\cite{FL} \cite{L}, the Manneville-Pomeau map is defined in a
slightly different way and indexed by a parameter $s$, the
correspondence of the parameters in the two cases, here and there,
is given by $t=s-1$).

\bigskip

In the second part of the paper, we  investigate global properties,
and get two results of rigidity.

First, we prove Theorem A in Section \ref{sec-renorshift}, that is,
there exits a unique  renormalization operator (up to an integer
positive parameter $a$) for the shift which respects the class of
dynamics we are considering (two coupled laminar regimes with two
fixed and weakly repelling points).

Several other results in the paper are for a general positive
parameter $a\in \mathbb{R}$ which appears in the late sections.

Diversity of the dynamics can therefore only follows from the choices of the laminar parameter $\gamma$ and from the choices of the conjugacies with the interval.

In Section \ref{sec-dyna-inter}  we consider a family of conjugacies
between the shift $\S$ and the interval. Instead of the laminar
parameter $\gamma$ as above,  we consider a parameter denoted by
$\alpha$. An associated value $\beta$ appears; it corresponds to a
different coupled laminar regime. For technical reasons, an extra
positive  parameter $a\in \mathbb{R}$ also appears. We then
introduce a new continuous fraction expansion and a pair of Gauss
maps associated to $\alpha$ and $\beta$.  In the last subsection we
study this family of transformation and get the second rigidity
result: up to some scaling renormalization, all these maps are
Manneville-Pomeau maps. In other words, the renormalization operator
on the shift (which is
 uniquely defined in some sense) is naturally associated, via a change
 of coordinates $\theta: \Sigma \to [0,1]$ (associated to a continuous fraction expansion), to a
 Manneville-Pomeau-like
 map (depending on certain parameters).

\bigskip

We point out that our setting has a different nature from the usual
one consider for the dynamics of one-dimensional transformations as
in \cite{CMMT} \cite{Co} \cite{Feigenbaum1} \cite{Feigenbaum2}
\cite{FW} \cite{MS} \cite{Tr} \cite{TC}.

The renormalization procedure we will consider here is associated to
the occurrence of dynamical phase-transitions (see \cite{FL},
\cite{L}, \cite{PY}, \cite{LSY}, \cite{Sa}, \cite{Sch}, \cite{Vi},
\cite{Ga} \cite{H}). Our proof do not require any of the results on
these papers. Some other mathematical references on phase
transitions are \cite{EFS} \cite{Ga} \cite{Gi} \cite{Si}.

%%%%%%%%%%%%%%%%%%%%%%%%%%%%%%%%%
%%%%%%%%%%%%%%%%%%%%%%%%%%%%%%%%%%
\section{The local renormalization operator}\label{sec-local}

%%%%%%%%%%%%%%%%%%%%%%%%%%%%%%%
\subsection{The Manneville-Pomeau model}\label{subsec-pomeau}

Consider
$$\left\{\begin{array}{l}
f(x)= \frac{x}{1-x},\mbox{ if },\, 0\leq x \leq \frac{1}{2},\\
f(x)= 2- \frac{1}{x},\mbox{ if }, \, \frac{1}{2}< x\leq 1,\\
\end{array}
\right.$$
%\end{eqnarray}

Note that one branch above is obtained from the other by the
change of coordinate $x \to (1-x).$

Consider also for $t\geq 0$,
$$\left\{\begin{array}{l}
\disp f_t (x) = \frac{x}{( 1-x^t)^{1/t}} = (\frac{x^t}{1-x^t})^{1/t}, \,\mbox{ if } 0\leq x \leq \frac{1}{2^{1/t}},\\
\disp f_t(x)= (2- \frac{1}{x^t})^{1/t},\,\mbox{ if }\frac{1}{2^{1/t}}< x\leq 1,\\
\end{array}\right.$$
%\end{eqnarray}

For instance, in the first injective domain of
$$f_t (x) = \frac{x}{( 1-x^t)^{1/t}},$$
if $h_t (x) = x^t ,$ and, if we denote $f_1 =f$, then we have $ f_t
= h_t^{-1} \circ f \circ h_t$. Same thing for  the other injective
domain.

From this property one can get invariant a.c.i.m for each $f_t$
(just change coordinates).

In this way we have a natural partition by fundamental domains for
the branch of $f_t$ in  $(0, (1/n)^{1/t})$ by $ (\frac{1}{3^{1/t}},
\frac{1}{2^{1/t}}),..., ( \, \frac{1}{k^{1/t}}\,,\,
\frac{1}{(k+1)^{1/t}}\,),...$

For a given $y$ , the two inverse branches by $f_t$ are $\disp
x_{1,t} (y)= \frac{y}{(1+y^t)^{1/t}}$ and $x_{2,t} (y) =
(\frac{1}{2-y^t})^{1/t}$.

The image of $x_{1,t}$ is $[0\,,\,(0.5)^{1/t}]$ and   the image of
$x_{2,t}$ is $[(0.5)^{1/t}\,,\,1]$.

Note that $f_1 '(x) = \frac{1}{(1-x)^2}$ for $x \in (0,0.5)$ and
$f_1 '(x) = \frac{1}{x^2}$ for $x \in (0.5,1)$

Moreover, by the chain rule, $\disp f_t '(x) =
\frac{1}{(1-x^t)^{1+1/t}}$ for $x \in (0,0.5^{1/t})$ and $\disp f_t
'(x) = \frac{(2 x^t -1)^{-1+1/t}}{x^2}$ for $x \in (0.5^{1/t},1)$

We point out the main property of $f_t$:
\begin{equation}\label{equ1-030907}
f_t^2 (\frac{x}{2^{1/t} })\,=\,   (f_t\circ f_t)\,
(\frac{x}{2^{1/t}})= \frac{1}{ 2^{1/t}} f_t (x) .
\end{equation}

One can see by induction that
\begin{equation}\label{equ2-030907}
 f_t^j (x) = \frac{1}{ (\frac{1}{x^t} -j)^{1/t}}.
\end{equation}

\begin{definition}
For a given value $t\geq 0$ we denote ${\mathcal F}_t$ the set of
non-negative continuous functions $V:[0,1]\to \mathbb{R}$ such that
$V(x)\sim (1+ \frac{1}{t}) x^{t}$ when $x \sim 0$.
\end{definition}

Above $V(x)\sim (1+ \frac{1}{t}) x^{t}$  means $ \disp V(x)=(1+
\frac{1}{t}) x^{t}+O(x^{t + \eps}).$

\begin{definition} The renormalization operator ${\mathcal R}$ acts on the set of
functions $V$ on ${\mathcal F}_t$ by means of
$${\mathcal R} (V)\, (x) = V(\, f_t (\frac{x}{2^{1/t} }) \,)+ V(
\frac{x}{2^{1/t} }).$$
\end{definition}

We point out that our model is not just a "$\log$" version of the
one described by Schuster and Just \cite{Sch}. This is so  because
we are using here the dynamics of $f_t$, given a priori. Anyway, our
result is in the spirit of the setting of Statistical Mechanics
where the renormalization is for potentials and not for different
dynamics \cite{EFS}. In other words, we look for fixed point
potentials and not for fixed point transformations.

By recurrence and using \eqref{equ1-030907} one can easily see that
$${\mathcal R}^n (V)\, (x)\, = \,   [S_{2^n} (V)]\,(\frac{x}{2^{n/t}}) =\sum_{j=0}^{2^n} V( f_t^j (\frac{x}{2^{n/t}})).$$

Note that from \eqref{equ2-030907}
\begin{equation}\label{equ3-030907}
 {\mathcal R}^n (V)\, ( x)\,=\,  \sum_{j=0}^{2^n} V( \frac{1}{(\frac{2^{n
}}{x^t} -j)^{1/t}}).
\end{equation}

Taking derivative of both sides of \eqref{equ1-030907} one can see that
$$V^*(x) =
\log f_t\, '(x)=- ( 1 + \frac{1}{t}) \, \log (1- x^t) ,$$ is a fixed
point for ${\mathcal R}$.

Our main interest is on universality type properties
for the
renormalization operator.

\begin{theorem}For any $V\in {\mathcal F}_t$, we have that
$$ \lim_{n \to \infty} {\mathcal R}^n (V) = V^*.$$
\end{theorem}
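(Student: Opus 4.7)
The plan is to exploit the closed-form expression $\mathcal{R}^n(V)(x) = \sum_{j=0}^{2^n} V\bigl(y_{j,n}\bigr)$ from \eqref{equ3-030907}, where $y_{j,n} := 1/(2^n/x^t - j)^{1/t}$, so that $y_{j,n}^t = x^t/(2^n - j x^t)$. Since $V\in\mathcal{F}_t$, I would write
\[
V(y) = (1+\tfrac{1}{t})\, y^t + R(y), \qquad R(y) = O(y^{t+\eps}) \text{ as } y\to 0,
\]
and then split $\mathcal{R}^n(V)(x)$ into a main term and a remainder. The strategy is to show that the main term is a Riemann sum converging to $V^*(x)$, while the remainder decays by a power of $2^{-n}$. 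Fix $x\in[0,1)$ throughout; the endpoint $x=1$ follows afterwards by continuity or by treating it as a fixed-point invariant.

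For the main term, set $u_j := j/2^n$ and $\Delta u := 1/2^n$. Then
\[
(1+\tfrac{1}{t})\sum_{j=0}^{2^n-1} y_{j,n}^t
= (1+\tfrac{1}{t})\sum_{j=0}^{2^n-1} \frac{x^t}{1 - u_j x^t}\,\Delta u
\;\xrightarrow[n\to\infty]{}\;
(1+\tfrac{1}{t})\int_0^1 \frac{x^t\,du}{1-u x^t} = -(1+\tfrac{1}{t})\log(1-x^t) = V^*(x).
\]
The integrand $u\mapsto x^t/(1-ux^t)$ is smooth and strictly increasing on $[0,1]$ for $x<1$, so the Riemann sum converges at the usual rate $O(1/2^n)$ (uniformly on compact sub-intervals of $[0,1)$). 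The single extra boundary term at $j=2^n$ is $V\bigl(y_{2^n,n}\bigr)$ with $y_{2^n,n}\asymp 2^{-n/t}\to 0$, hence negligible.

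For the error, I would use $|R(y)| \le C\, y^{t+\eps}$ near $0$ (and uniform continuity on $[\delta,1]$ to absorb the finitely many $y_{j,n}$ that are not small when $j$ approaches $2^n/x^t$, which for $x<1$ is strictly $<2^n$). Then
\[
\sum_{j=0}^{2^n-1} |R(y_{j,n})|
\;\le\; C\sum_{j=0}^{2^n-1}\bigl(y_{j,n}^t\bigr)^{1+\eps/t}
\;=\; \frac{C}{2^{n\eps/t}}\sum_{j=0}^{2^n-1}\frac{1}{2^n}\left(\frac{x^t}{1-u_j x^t}\right)^{1+\eps/t}.
\]
The remaining sum is a Riemann sum for $\int_0^1 (x^t/(1-ux^t))^{1+\eps/t}du$, which is finite for $x<1$, so the whole error is $O(2^{-n\eps/t})$ and vanishes as $n\to\infty$. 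Combined with Step 1 this gives pointwise convergence $\mathcal{R}^n(V)(x)\to V^*(x)$.

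The main obstacle is the uniformity of the two Riemann-sum arguments as $x$ approaches $1$, since the integrand and its $(1+\eps/t)$-power blow up there. I would address this by noting that $V^*$ is itself a fixed point of $\mathcal{R}$, so $\mathcal{R}^n(V)-V^*=\mathcal{R}^n(V-V^*)$; applying the same splitting to $V-V^*\in\mathcal{F}_t$ (now with a remainder of order $y^{t+\eps}$ but zero leading coefficient) reduces the problem to showing that the error term alone tends to $0$, which is cleaner and avoids the subtle cancellation near $x=1$. Uniform convergence on $[0,1]$ can then be recovered from equicontinuity together with the pointwise statement, if needed.
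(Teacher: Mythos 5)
Your proposal is correct and follows essentially the same route as the paper: both use the closed form $\mathcal{R}^n(V)(x)=\sum_{j}V\bigl(1/(2^n/x^t-j)^{1/t}\bigr)$, expand $V(y)=(1+\tfrac1t)y^t+O(y^{t+\eps})$, identify the leading term as a Riemann sum for $\int_0^1\frac{dr}{1/x^t-r}$ giving $-(1+\tfrac1t)\log(1-x^t)=V^*(x)$, and kill the remainder by the factor $2^{-n\eps/t}$. The only difference is cosmetic: the paper sidesteps the behaviour near $x=1$ by working on a fundamental domain $x\in[1/(m+1)^{1/t},1/m^{1/t}]$ with $m\ge 2$, whereas you treat all $x<1$ pointwise and add the (sound but optional) reduction via $\mathcal{R}^n(V)-V^*=\mathcal{R}^n(V-V^*)$.
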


\begin{proof}
Let $\disp x $ be in $\disp\left[\frac{1}{(m+1)^{1/t}},\frac{1
}{m^{1/t}}\right]$,  with $m\geq 2$.

Then, $\disp \frac{x}{ 2^{n/t}}$ belongs to $\disp\left[\frac{
1}{2^{n/t}(m+1)^{1/t}},\frac{1 }{2^{n/t}m^{1/t}}\right]$.

Hence, the smallest value for  $(\frac{2^{n }}{x^t} -j)$,
$j=0,1,\ldots,2^n$ is obtained when $j=2^n$, and is larger than
$\disp 2^{n}(2-1)$. Therefore each term $\disp \frac1{\frac{2^{n
}}{x^t} -j}$  is very close to $0$, and it makes sense to
approximate $V\left( f_t^j (\frac{x}{2^{n/t}})\right)$. Hence we
have

\begin{eqnarray}
{\mathcal R}^n (V)\, ( x)\,=\,  \sum_{j=0}^{2^n} V\left(
\frac{1}{(\frac{2^{n }}{x^t} -j)^{1/t}}\right)\\ = (1+ \frac{1}{t})
\sum_{j=0}^{2^n} \frac{1}{(\frac{2^{n }}{x^t}
-j)}+O(\frac{1}{(\frac{2^{n }}{x^t} -j)^{1+\eps/t}})\nonumber\\ = (1+
\frac{1}{t}) \frac{1}{ 2^{n}} \sum_{j=0}^{2^n}
\frac{1}{(\frac{1}{x^t} -\frac{j}{ 2^n})} +\frac{1}{
2^{n\eps/t}}O\left(\frac{1}{ 2^{n}} \sum_{j=0}^{2^n}
\frac{1}{(\frac{1}{x^t} -\frac{j}{
2^n})^{1+\eps/t}}\right).\label{equ1-toto1}
\end{eqnarray}

For a fixed $x$, the last expression is asymptotic to
$$  (1+ \frac{1}{t})\int_0^1\, \frac{1}{(\frac{1}{x^t} -r)}\, dr=-
(1+ \frac{1}{t})\,[\log (\frac{1}{x^t} -r)]_0^1=  -\,(1+
\frac{1}{t})\log(\frac{1}{1-x^t} ) = V^* (x).$$

\end{proof}

We now explain how this is related t and proves Theorem B.

First, note that if $V(x)=c.x^{t'}+O(x^{t'+\eps})$ with $t'<t$,
then, assuming $c>0$,
 the same proof than above yields that
$$\CR^n(V)\rightarrow+\8.$$
Therefore, only potentials $V$ in $\CF_{t}$ can converge to the
fixed point $V^*$.

Let us now assume that $V$ belongs to $\CF_{t}$. Let us set
$g:[0,1]\hookleftarrow$ such that $V=\log g'$. Then
$\CR^nV\rightarrow V^*$ is the expression of $g$ belongs to the
stable leaf of $f_{t}$ for the action of $\CR$. And as we said, this
exactly means that $g$ has the same germ than $f_{t}$.

%%%%%%%%%%%%%%%%%%%%%%%%%%%%%

\subsection{The one-side  shift $\Sigma$}\label{subsec-renorshift}

We consider here the Bernoulli space $\Sigma=\{0,1\}^\mathbb{N}$
and the shift acting on $\Sigma$.

We denote by $M_n \subset \Sigma, {\text{ \, for } } n \geq 1,$
the cylinder set $[\underbrace{000\dots 00}_{n} 1]$ and by $M_0$
the cylinder set $[1].$  The ordered collection
$(M_n)_{n=0}^\infty$ is a partition of $\Sigma$.

\begin{definition} Consider ${\mathcal F}$ the set of non-negative continuous functions
$V:\Sigma \to \mathbb{R}$ which are constant in the set $M_n$, for
all $n\geq 1.$ We denote by $a_n$ the value of $V$ on each $M_n$.
We further assume that $\displaystyle a_n=\frac{1}{n}+O(\frac1{n^{1+\eps}})$, for some positive $\eps$.
\end{definition}

\begin{definition} We define the renormalization operator in the
following way:

For $x:=\displaystyle
(\underbrace{0,\ldots,0}_{c_1},\underbrace{1,\ldots,1}_{c_2}\underbrace{0,\ldots,0}_{c_3},1,\ldots)$
we set
$${\mathcal R}(V)(x)=V((\underbrace{0,\ldots,0}_{2c_1},\underbrace{1,\ldots,1}_{c_2}\underbrace{0,\ldots,0}_{c_3},1,\ldots))+
$$
$$V((\underbrace{0,\ldots,0}_{2c_1+1},\underbrace{1,\ldots,1}_{c_2}\underbrace{0,\ldots,0}_{c_3},1,\ldots)).$$
\end{definition}

Note that the potential $V^*$, with value $\log\frac{k+1}{k}$ in
$M_k$, is invariant by ${\mathcal R}$. Indeed we have
$$\log\frac{k+1}{k}=\log\frac{2k+1}{2k}+\log\frac{2k+1+1}{2k+1}.$$

Gibbs states (and its decay of correlation) of the potential $V^*$,
with value $\gamma\, \log\frac{k+1}{k}$ in $M_k$,  were analyzed in
\cite{FL}\cite{L}.

\begin{theorem}
Each $V\in {\mathcal F}$ is attracted by the renormalization operator
${\mathcal R}$  to the fixed point $V^*$. \end{theorem}

\begin{proof}
 An easy computation, by induction, gives the formula
\begin{equation}\label{equ-toto}
{\mathcal R}^n(V)(x)=S_{2^n}(V)(x_n)
\end{equation}
where $\displaystyle
x_n=(\underbrace{0,\ldots,0}_{2^nc_1+2^n-1},\underbrace{1,\ldots,1}_{c_2}\underbrace{0,\ldots,0}_{c_3},1,\ldots)$
and $S_k(V)$ is the Birkhoff sum $V(.)+V\circ
\sigma(.)+\ldots+V\circ\sigma^{k-1}(.)$.

Equation (\ref{equ-toto}) yields for $x\in M_{c_1}$
\begin{eqnarray*}
{\mathcal R}(V)(x)=\sum_{j=0}^{2^n-1}a_{2^nc_1+j}&=&\sum_{j=0}^{2^n-1}\frac{1}{(2^nc_1+j)}+O\left(\frac{1}{(2^nc_1+j)^{1+\eps}}\right)\\
&=&\frac{1}{2^n}\sum_{j=0}^{2^n-1}\frac{1}{(c_1+\frac{j}{2^n})}+\frac{1}{2^{n\eps}}O(\frac{1}{2^n}\sum_{j=0}^{2^n-1}\frac{1}{(c_1+\frac{j}{2^n})^{1+\eps}})
\end{eqnarray*}
The first term in the right hand side is a Riemann sum, and
converges, as $n\to \infty$,  to
$\displaystyle\int_0^1\frac{1}{(c_1+r)}dr$. Again the second term
goes to zero.

Note that the integral $\displaystyle \int_0^1\frac{1}{(c_1+r)}dr$
is the same as  $-\log\frac{c_1+1}{c_1}$. Thus, and in the same way as before
, if the potential $V$ satisfies the condition
$$a_k=\frac{1}{k}+\frac1{k^{1+\eps}},$$
we have convergence of ${\mathcal R}^n(V)(x)$ to $V^*(x)$ when $n$
goes to $+\infty$.

\end{proof}

\begin{remark}\label{rem-gamma}
 A similar result can be obtained for  $V^*(x) = \gamma\,
\log\frac{k+1}{k}=a_k$, when $x \in M_k$, and $\gamma>1$ is fixed.
In this case we have to consider  ${\mathcal R}$ acting on the set
${\mathcal F}_\gamma$, as the set of $V$ such that $a_k=\disp
\frac{\gamma}{k}+O(\frac\gamma{k^{(1+\eps)}})$.

\end{remark}
%%%%%%%%%%%%%%%%%%%%%%%%%%%%%%%%%%%
\subsection{The two-sided  shift $\hat{\Sigma}$}\label{subsec-renor2sideshift}

We denote $\hat{\Sigma}= \{0,1\}^\mathbb{Z}$ and also denote each
point in this set by $<y|\,x>= <...y_2,y_1|\, x_0,x_1,x_2..>$ where
$x$ is future and $y$ is past. The shift $\hat{\sigma}$ is defined
by
$$ \hat{\sigma}( <...y_2,y_1|\, x_0,x_1,x_2..>)=
<...y_2,y_1,x_0|\,x_1,x_2..>.$$

\begin{definition} Consider ${\mathcal F}$ the set of non-negative continuous functions
$V:\Sigma \to \mathbb{R}$, which are constant in the sets of the
form
$$M_m|M_n=\{<y,x>, x \in M_n, y \in M_m\},$$
for each pair $m,n\geq 1.$ We denote by $a_{m,n}=V(m,n)$ the value
of $V$ on each $M_m \times M_n$. We further assume that
$\displaystyle a_{m,n}=\frac{m+n}{(m-1)\, n}+O\left(\frac1{m^{1+\eps_{1}}}\right)+O\left(\frac1{n^{1+\eps_{2}}}\right)$, for positive $\eps_{i}$.
\end{definition}

\begin{definition} We define the renormalization operator in the
following way:

For
$$z:=\displaystyle
(\underbrace{0,\ldots,0}_{d_3},\underbrace{1,\ldots,1}_{d_2}\underbrace{0,\ldots,0}_{\zeta})
\,|\,
(\underbrace{0,\ldots,0}_{c_1},\underbrace{1,\ldots,1}_{c_2}\underbrace{0,\ldots,0}_{c_3},1,\ldots)
\in M_\zeta \times M_{c_1},$$ we set
$${\mathcal R}(V)(z)=V(\underbrace{0,\ldots,0}_{d_3},\underbrace{1,\ldots,1}_{d_2}\underbrace{0,\ldots,0}_{2\,\zeta-1})
\,|\, (\underbrace{0,\ldots,0}_{2\, c_1+
1},\underbrace{1,\ldots,1}_{c_2}\underbrace{0,\ldots,0}_{c_3},1,\ldots))+
$$
$$V(\underbrace{0,\ldots,0}_{d_3},\underbrace{1,\ldots,1}_{d_2}\underbrace{0,\ldots,0}_{2\, \zeta})
\,|\, (\underbrace{0,\ldots,0}_{2\,
c_1},\underbrace{1,\ldots,1}_{c_2}\underbrace{0,\ldots,0}_{c_3},1,\ldots)).$$
\end{definition}

In order to simplify the notation we write
$${\mathcal R}(V)(z) = V(2 \zeta -1,2 c_1 +1) + V(2\, \zeta, 2
c_1).$$

One can show that for $V \in {\mathcal F}$, and $z \in M_m|M_n$, we
have that
$${\mathcal R}^n (V)(z) = \sum_{k=0}^{2^n-1} V(2^n \zeta - 2^n +1 +k,
2^n c_1 + 2^n -1-k).$$

It is easy to see that the potential given by: for each $z \in
M_j|M_k$
$$V^* (z) = \log \, \frac{j\, (k+1)}{(j-1) \, k},$$
defines a fixed point potential for ${\mathcal R}$.

\begin{theorem}
Each $V\in {\mathcal F}$ is attracted by the renormalization operator
${\mathcal R}$  to the fixed point $V^*$. \end{theorem}

\begin{proof}
 Given  $V\in {\mathcal F}$, we have
\begin{eqnarray*}
{\mathcal R}^n (V)(z) &=& \sum_{k=0}^{2^n-1} V(2^n \zeta - 2^n +1
+k,
2^n c_1 + 2^n -1-k)\\
&=&\sum_{k=0}^{2^n-1}\left( \frac{2^n (c_1 +\zeta)}{ (2^n \zeta -
2^n +k)\,(2^n c_1 + 2^n
-1-k)}+O\left(\frac1{(2^n \zeta - 2^n +k+1)^{1+\eps_{1}}}\right)\right.\\
&&\hskip 3cm +\left.O\left(\frac1{(2^n c_1 + 2^n
-1-k)^{1+\eps_{2}}}\right)\right)\\
&=&\frac{1}{2^n} \,\sum_{k=0}^{2^n-1} \frac{(c_1 +\zeta)}{ ((\zeta -
1) +\frac{k}{2^n}\,)\,\,(\,(c_1 + 1) - \frac{k+1}{2^n}
)}+O(\frac1{2^{n\eps_{1}}})+O(\frac1{2^{n\eps_{2}}})
\end{eqnarray*}

Taking $n$ large we get
$$ \frac{1}{2^n} \,\sum_{k=0}^{2^n-1} \frac{(c_1 +\zeta)}{ ((\zeta - 1) +\frac{k}{2^n}\,)\,\,(\,(c_1 +
1) - \frac{k+1}{2^n}  )}\sim $$
$$ \int_0^1\, \frac{(c_1 +\zeta)}{ ((\zeta - 1) +x\,)\,\,(\,(c_1 +
1) - x  )}\, dx=$$
$$ (c_1 +\zeta)\, \int_0^1\, \frac{1}{ (\zeta - 1 +x\,)}\,+\,\frac{1}{ \,(c_1 +
1 - x ) }\, dx=\left[\,\log \frac{\zeta-1 +x}{c_1
+1-x}\,\right]_0^1=$$
$$ \log\,(\, \frac{\zeta\, (c_1 +1)}{(\zeta-1) \, c_1}\,)=V^*(z).$$
\end{proof}

%%%%%%%%%%%%%%%%%%%%%%%%%%%%%%%%%
\subsection{The Baker Manneville-Pomeau bijective
transformation}\label{subsec-cadobonux}

Using the notation of the first section, for a fixed value of $t$,
consider $$F_t: [0,1]\times [0,1] \to  [0,1]\times [0,1] ,$$ a
bijective transformation such that satisfies for each $x $ and $y$
$$ F_t(x, f_t(y))= (f_t(x), y).$$

\begin{definition}
For a given value $t\geq 0$, we denote ${\mathcal F}_t$ the set of
non-negative continuous functions $V:[0,1]\times [0,1]\to
\mathbb{R}$ such that $V(x,y)=  \, (1+ \frac{1}{t})\,
\log(\frac{1+x}{1-y})+O(x^{1+\eps_{2}})+O(y^{1+\eps_{1}})$ when
$(x,y) \sim (0,0)$.
\end{definition}

In order to simplify the notation we consider here only the case
$t=1$. Similar results will be true for the general case $t\geq 0.$
We use the notation ${\mathcal F}_1={\mathcal F}.$

\begin{definition} The renormalization operator ${\mathcal R}$ acts on the set of
functions $V$ on ${\mathcal F}$ by means of
$${\mathcal R} (V)\, (x,y) = V(\frac{2}{2+x }, \frac{y}{2-y})+ V(
\frac{x}{2 }, \frac{y}{2 })$$
\end{definition}

In the same way as before one can show that
$$ V^* (x,y) = \log( \frac{1+x}{1-y}),$$
is a fixed point for ${\mathcal R}$.

We leave for the reader the proof of the theorem:

\begin{theorem}
Each $V\in {\mathcal F}$ is attracted by the renormalization operator
${\mathcal R}$  to the fixed point $V^*$.
\end{theorem}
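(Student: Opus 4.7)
The strategy is to emulate the argument given for the two-sided shift in Subsection~\ref{subsec-renor2sideshift}. Both summands of one application of $\mathcal{R}$ contract $x$ and $y$ by roughly a factor of $2$, so after $n$ iterations one obtains a sum of $2^n$ terms, each evaluated at a point whose coordinates are of order $x/2^n$ and $y/2^n$. The change of variables $c=1/x$, $\zeta=1/y$ identifies the two branches of $\mathcal{R}$ with the integer maps $(c,\zeta)\mapsto(2c,2\zeta)$ and $(c,\zeta)\mapsto(2c+1,2\zeta-1)$ already encountered in Subsection~\ref{subsec-renor2sideshift}, so the Baker set-up is the continuous analogue of that one and the same computation should carry through.

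First I would establish, by induction on $n$, a closed-form expression of the shape
\[
\mathcal{R}^n(V)(x,y)=\sum_{a=0}^{2^n-1}V\!\left(\frac{x}{2^n+ax},\,\frac{y}{2^n-ay}\right),
\]
which is the continuous analogue of the formula $\mathcal{R}^n(V)(z)=\sum_{k=0}^{2^n-1}V(2^n\zeta-2^n+1+k,\,2^nc_1+2^n-1-k)$ from the two-sided shift. The inductive step rests on the observation that, in the integer coordinates, the two children of a pair $(2^nc+a,\,2^n\zeta-a)$ are $(2^{n+1}c+2a,\,2^{n+1}\zeta-2a)$ and $(2^{n+1}c+2a+1,\,2^{n+1}\zeta-2a-1)$, so the index $a$ runs precisely over $\{0,\dots,2^n-1\}$ after $n$ iterations.

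Next I would substitute $V^*(u,v)=\log((1+u)/(1-v))$ into the iteration formula and observe that the resulting product inside the logarithm telescopes exactly: $\prod_{a=0}^{2^n-1}(2^n+(a+1)x)/(2^n+ax)=1+x$ and $\prod_{a=0}^{2^n-1}(2^n-ay)/(2^n-(a+1)y)=1/(1-y)$, so that the sum equals $V^*(x,y)$ identically in $n$. This reproves the fixed-point property and, more usefully, tells us that the $V^*$-part of a generic $V$ already contributes the full limit. For general $V\in\mathcal{F}$, writing $V=V^*+R$ with $R(u,v)=O(u^{1+\varepsilon_2})+O(v^{1+\varepsilon_1})$ gives
\[
\mathcal{R}^n(V)(x,y)-V^*(x,y)=\sum_{a=0}^{2^n-1}R\!\left(\frac{x}{2^n+ax},\,\frac{y}{2^n-ay}\right).
\]
Since the two coordinates of each argument are bounded by $x/2^n$ and $y/2^n$ respectively on the relevant domain, each of the $2^n$ summands is $O(2^{-n(1+\min(\varepsilon_1,\varepsilon_2))})$, so the whole error is $O(2^{-n\min(\varepsilon_1,\varepsilon_2)})\to 0$.

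The only real point to watch is the inductive bookkeeping of indices in the closed-form formula; once this is in place the telescoping makes the rest of the argument immediate, short-circuiting the Riemann-sum-to-integral step used in the earlier subsections. The same scheme adapts to general $t\ge 0$ by replacing the contractions by their $t$-dependent analogues and restoring the factor $(1+1/t)$ in front of $V^*$.
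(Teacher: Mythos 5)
Your argument is correct, but it is not the argument the paper intends: the paper leaves this theorem to the reader with the instruction to proceed ``in the same way as before,'' i.e.\ to expand $V$ to first order at each of the $2^n$ points and identify a Riemann sum converging to an integral that evaluates to $V^*$. You instead exploit the linearity of $\mathcal{R}$, the \emph{exact} telescoping $\mathcal{R}^n(V^*)=V^*$ read off from the closed form, and a direct estimate of $\mathcal{R}^n(V-V^*)$. This is cleaner, bypasses the integral entirely, and buys an explicit geometric rate $O(2^{-n\min(\varepsilon_1,\varepsilon_2)})$ that the Riemann-sum route does not give; the same device would also streamline the proofs in the one-sided, two-sided and Manneville--Pomeau subsections. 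Three points deserve attention. First, you have silently replaced the paper's first branch $\frac{2}{2+x}$ by $\frac{x}{2+x}$; this correction is in fact \emph{necessary} (with the literal $\frac{2}{2+x}$ the image does not contract to $(0,0)$ and $V^*$ is not even a fixed point), and it should be stated explicitly since your closed-form induction and the identification with $(c,\zeta)\mapsto(2c+1,2\zeta-1)$ depend on it. Second, the bound $\frac{y}{2^n-ay}\le\frac{y}{2^n}$ is false for $a>0$; what is true and suffices for pointwise convergence is $\frac{y}{2^n-ay}\le\frac{y}{(1-y)2^n}$ for fixed $y<1$ (the point $y=1$ must be excluded, as $\zeta\mapsto 2\zeta-1$ fixes $\zeta=1$). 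Third, your decomposition $V=V^*+R$ assumes the leading term of $V$ is $\log\frac{1+x}{1-y}$, while the paper's definition of $\mathcal{F}=\mathcal{F}_1$ carries the factor $(1+\frac{1}{1})=2$; your telescoping makes transparent that $\mathcal{R}^n(V)$ converges to whatever multiple of $\log\frac{1+x}{1-y}$ heads the expansion of $V$, so the paper's normalization of the class and of the limit differ by a factor of $2$ --- your proof exposes this, but you should say which normalization you adopt.
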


%%%%%%%%%%%%%%%%%%%%%%%%%%%%%%%%%

%%%%%%%%%%%%%%%%%%%%%%%%%%%
\section{Uniqueness of the renormalization on the shift}\label{sec-renorshift}

In this section, we consider global properties and we prove that, up
to relatively weak assumptions, there exists a unique type of renormalization
operator in the shift.

We first state a simple lemma:
\begin{lemma}
Let $(X_{1},T_{1})$ and $(X_{2},T_{2})$ be two conjugated dynamical systems. Let $\theta:X_{1}\rightarrow X_{2}$ be the conjugacy.
If $H_{1}$ satisfies
$\disp H_{1}^{-1}\circ T_{1}^2\circ H_{1}=T_{1}$,
then $H_{2}:=\theta\circ H_{1}\circ  \theta^{-1}$ satisfies
$$H_{2}^{-1}\circ T_{2}^2\circ H_{2}=T_{2}.$$
Moreover if $V_{1}$ satisfies $V_{1}(T_{1}(H_{1}(x)))+V_{1}(H_{1}(x))=V_{1}(x)$, then $V_{2}:=V_{1}\circ \theta^{-1}$ satisfies
$$V_{2}(T_{2}(H_{2}(x)))+V_{2}(H_{2}(x))=V_{2}(x).$$
\end{lemma}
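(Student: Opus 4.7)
The plan is to unpack the definitions and push everything through the conjugacy relation $\theta\circ T_{1}=T_{2}\circ \theta$, equivalently $T_{2}=\theta\circ T_{1}\circ \theta^{-1}$. Both assertions reduce to mechanical substitutions; there is no genuine obstacle, only bookkeeping of the $\theta$'s and $\theta^{-1}$'s.

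For the first assertion, I would first square the conjugacy to obtain $T_{2}^{2}=\theta\circ T_{1}^{2}\circ \theta^{-1}$. Then, observing that $H_{2}^{-1}=\theta\circ H_{1}^{-1}\circ \theta^{-1}$, I would compute
\[
H_{2}^{-1}\circ T_{2}^{2}\circ H_{2}
=\theta\circ H_{1}^{-1}\circ \theta^{-1}\circ \theta\circ T_{1}^{2}\circ \theta^{-1}\circ \theta\circ H_{1}\circ \theta^{-1}
=\theta\circ(H_{1}^{-1}\circ T_{1}^{2}\circ H_{1})\circ \theta^{-1},
\]
and by hypothesis the middle block is $T_{1}$, so the result equals $\theta\circ T_{1}\circ \theta^{-1}=T_{2}$.

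For the second assertion, I would introduce the auxiliary point $y:=\theta^{-1}(x)$ to collapse the notation. A direct check gives
\[
V_{2}(H_{2}(x))=V_{1}\bigl(\theta^{-1}(\theta\circ H_{1}\circ \theta^{-1}(x))\bigr)=V_{1}(H_{1}(y)),
\]
while, using $T_{2}\circ H_{2}=\theta\circ T_{1}\circ H_{1}\circ \theta^{-1}$ (which follows from the two preceding conjugacy identities), one also gets $V_{2}(T_{2}(H_{2}(x)))=V_{1}(T_{1}(H_{1}(y)))$. Summing and invoking the functional equation satisfied by $V_{1}$, the sum equals $V_{1}(y)=V_{1}(\theta^{-1}(x))=V_{2}(x)$, as required.

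The only thing that might look like an obstacle is making sure $\theta^{-1}$ exists and interacts well with the dynamics; however this is granted the moment we say the two systems are conjugate via $\theta$. Thus the entire argument is essentially the observation that the renormalization functional equation is invariant under topological conjugacy, once one rewrites $H_{2}$ and $V_{2}$ in the natural $\theta$-transported way.
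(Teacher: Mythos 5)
Your proof is correct: both identities follow by direct substitution using $T_{2}=\theta\circ T_{1}\circ\theta^{-1}$, and your bookkeeping of the $\theta$'s checks out. The paper states this as ``a simple lemma'' and omits the proof entirely, so your verification is exactly the intended (and essentially unique) argument.
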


In view of those results, it's meaningful to study maps $H$ on the
shift  which satisfy the property $H^{-1}\circ\s^2\circ H=\s$. More
assumptions are necessary, if one wants to respect some other
properties of the map $x\mapsto x/2$ in the interval. If $\bar0^\8$
in the shift represents the 0 of the interval, then
$H(\ul0^\8)=\ul0^\8$ needs to hold. Moreover the map $H$ has to
``increase'', which can be translated into ``$H$ respect the
lexicographic order in $\S$''. The uniqueness of the type of such
map $H$  follows from the next proposition:

\begin{proposition}\label{prop-renor-shift}
Let $H$ be an increasing function on the shift $\S_{2}$ (for the
lexicographic order), such that
\begin{enumerate}
  \item for every $\ul{x}=(1,x_{2},x_{3},\ldots)$, $\disp H(\ul x)=(\underbrace{0,\ldots,0}_{a \ terms},1,x_{2},x_{3},\ldots)$, where $a\geq 1$;
  \item $H^{-1}\circ\s^2\circ H=\s$,
  \item $H(\ul 0^\8)=\ul 0^\8$
\end{enumerate}
Then, for every $\disp \ul x=(\underbrace{0,\ldots,0}_{n_{0}\
terms},1,x_{n_{0}+2}\ldots)$, we have $\disp
H(x)=(\underbrace{0,\ldots,0}_{2n_{0}+a\
terms},1,x_{n_{0}+2},\ldots)$.
\end{proposition}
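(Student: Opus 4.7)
The argument is by induction on $n_{0} \geq 0$. The base case $n_{0} = 0$ is exactly hypothesis (1). For the inductive step, fix $n_{0} \geq 1$, assume the formula for $n_{0} - 1$, and let $\ul x = (\underbrace{0,\ldots,0}_{n_{0}}, 1, x_{n_{0}+2},\ldots)$. Rewriting (2) as $\s^{2}\circ H = H\circ\s$ and applying it to $\ul x$, combined with the induction hypothesis applied to $\s(\ul x) = (\underbrace{0,\ldots,0}_{n_{0}-1}, 1, x_{n_{0}+2},\ldots)$, yields
$$\s^{2}(H(\ul x)) \;=\; H(\s(\ul x)) \;=\; (\underbrace{0,\ldots,0}_{2n_{0}-2+a}, 1, x_{n_{0}+2},\ldots).$$
Hence $H(\ul x)$ must be of the form
$$H(\ul x) \;=\; (y_{1}, y_{2}, \underbrace{0,\ldots,0}_{2n_{0}-2+a}, 1, x_{n_{0}+2},\ldots)$$
for some $y_{1},y_{2}\in\{0,1\}$ that remain to be determined.

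To pin down $y_{1}$ and $y_{2}$, I would exploit the monotonicity of $H$ together with (1) and (3). Set $\ul x^{*} = (1,0,0,\ldots)$, so that by (1), $H(\ul x^{*}) = (\underbrace{0,\ldots,0}_{a},1,0,0,\ldots)$. Since $\ul x$ begins with a $0$, we have $\ul 0^{\8} < \ul x < \ul x^{*}$ in the lexicographic order, and therefore, by (3) and the fact that $H$ is increasing (and injective, since (2) requires $H$ to be invertible),
$$\ul 0^{\8} \;=\; H(\ul 0^{\8}) \;<\; H(\ul x) \;<\; H(\ul x^{*}) \;=\; (\underbrace{0,\ldots,0}_{a},1,0,0,\ldots).$$
A short case analysis on the first position at which $H(\ul x)$ differs from $H(\ul x^{*})$ shows that the only way to realize the strict upper bound is for $H(\ul x)$ to have a $0$ in position $a+1$ (where $H(\ul x^{*})$ has its first $1$), and hence to begin with at least $a+1$ zeros. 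Because $a \geq 1$, this forces both $y_{1} = 0$ and $y_{2} = 0$, giving the desired identity $H(\ul x) = (\underbrace{0,\ldots,0}_{2n_{0}+a}, 1, x_{n_{0}+2}, \ldots)$ and closing the induction.

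The only step with any real subtlety is the passage from the lex inequality $H(\ul x) < (\underbrace{0,\ldots,0}_{a},1,0,0,\ldots)$ to the statement ``$H(\ul x)$ begins with at least $a+1$ zeros''. The key is that $\ul x^{*} = (1,0^{\8})$ is the \emph{smallest} sequence starting with $1$, so monotonicity of $H$ produces the sharpest possible upper bound, leaving no room for a $1$ in the first $a+1$ coordinates of $H(\ul x)$. Everything else is bookkeeping propagated through the conjugacy relation $\s^{2}\circ H = H\circ\s$.
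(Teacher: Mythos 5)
Your proof is correct, and it takes a genuinely shorter route than the paper's. The paper does not induct on $n_{0}$: it applies the conjugacy in one block, using $\sigma^{2n_{0}}\circ H=H\circ\sigma^{n_{0}}$ to determine the tail of $H(\underline{x})$ while leaving all $2n_{0}$ leading digits unknown. It then kills the odd-position digits by noting that $H$ maps the cylinder $[0]$ into $[0]$, and has to work considerably harder for the even positions: a ``cascade'' comparison between the unknown digits of $H(\underline{x})$ and those of $H(\sigma(\underline{x}))$, with a separate argument for the case $a=1$. Your induction leaves only two unknown digits per step, and your key observation --- that $(1,0,0,\ldots)$ is the lexicographic minimum of the cylinder $[1]$, so monotonicity forces $H(\underline{x})<(\underbrace{0,\ldots,0}_{a},1,0,0,\ldots)$ and hence $H(\underline{x})$ to begin with at least $a+1\geq 2$ zeros --- disposes of both digits at once, with no case split on $a$. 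The one point worth making explicit (you do flag it) is that the strict inequality $H(\underline{x})<H(\underline{x}^{*})$ needs $H$ injective, which is guaranteed since hypothesis (2) presupposes $H^{-1}$ exists; and the lexicographic case analysis showing that $z<(\underbrace{0,\ldots,0}_{a},1,0,0,\ldots)$ forces $z$ to start with $a+1$ zeros is exactly as you describe, since the first position of disagreement must be one where the upper bound carries a $1$, and it has only one. What your approach buys is brevity and uniformity in $a$; what the paper's buys is, arguably, a more explicit picture of how the constraint propagates digit by digit, but your argument is complete as it stands.
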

\begin{proof}
Note that the formula is correct for every $\ul x$ on the form
$(1,\ldots)$. We first consider the case where $a\ge 2$. Note that
we took $a=1$ in a previous section where we considered the shift.

Let us pick some $x$, which necessarily has to be of the form $\disp
\ul x=(\underbrace{0,\ldots,0}_{n_{0}\ terms},1,x_{n_{0}+2}\ldots)$.
We assume $n_{0}>1$. We point out that $\s(\ul x)\ge \ul x$, because
a ``1'' appears sooner in $\s(\ul x)$ than in $\ul x$. Therefore we
must have
\begin{equation}\label{equ1-ordre-shift}
H(\s(\ul x))> H(\ul x), \mbox{ if }x\neq \ul 0^\8,\ul 1^\8.
\end{equation}
Now, $\s^{n_{0}}(\ul x)$ belongs to the cylinder $[1]$, hence
$H(\s^{n_{0}}(\ul x))=[\ul a\ul x]$, where $\ul a$ is the finite
word $\underbrace{0,\ldots,0}_{a\ terms}$, and $[\quad ]$ is the
concatenation of words in the shift. As we said before, in the
moment we are considering such $a\geq 2$. The constraint $\disp
H^{-1}\circ\s^2\circ H=\s$, yields $\s^{2n_{0}}\circ H=H\circ
\s^{n_{0}}$. Therefore
\begin{equation}\label{equ2-ordre-shift}
H(\ul x)=(\underbrace{?,\ldots,?}_{2n_{0}\ terms},\underbrace{0,\ldots,0}_{a\ terms},1,x_{n_{0}+2},\ldots),
\end{equation}
where the first $2n_{0}$ digits are unknown.

As $H$ has the increasing property, its image is in the cylinder
$[0]$, and the first digit in \eqref{equ2-ordre-shift} is 0. The
property $\disp H^{-1}\circ\s^2\circ H=\s$, also means $\disp
\s^2\circ H=H\circ\s$. Therefore, each odd unknown digit in
\eqref{equ2-ordre-shift} is 0.

Now, we prove that no even unknown digit can be 1. Let us assume that the second digit is 1. Doing the same work for $\s(\ul x)$
(here we use $n_{0}>1$), we have
\begin{equation}\label{equ3-ordre-shift}
H\circ\s(\ul x)=(\underbrace{0,?,\ldots,0,?}_{2n_{0}-2\ terms},\underbrace{0,\ldots,0}_{a\ terms},1,x_{n_{0}+2},\ldots),
\end{equation}
where each unknown digit at position $2p$ is the same digit than the digit in position $2p+2$ in \eqref{equ2-ordre-shift}.
To get these equalities, we again used $\disp \s^2\circ H=H\circ\s$.

If the second digit in \eqref{equ2-ordre-shift} is a ``1'', then to
respect \eqref{equ1-ordre-shift}, the second digit in
\eqref{equ3-ordre-shift} must be a ``1'' too. Therefore, the cascade
rule yields that each even unknown digit must be 1, in
\eqref{equ2-ordre-shift} and in \eqref{equ3-ordre-shift}. In that
case, and as we assume $a\ge 2$, there will be a ``1'' in $H(\ul x)$
in position $2n_{0}$, and a ``0'' for $H\circ\s(\ul x)$, and the two
words coincide before that position. Hence, $H(\s(\ul x))<H(\ul x)$,
which is impossible by \eqref{equ1-ordre-shift}. This proves that
the assumption is false, and the second unknown digit in
\eqref{equ2-ordre-shift} must be a ``0''.

Note that this also holds if $n_{0}=1$. Indeed, in that case we
completely know $H\circ\s(\ul x)$, by assumption {\it (1)} in the
proposition. Therefore the above discussion means that for every
$\ul \xi=(0,\ldots)$, $H(\ul\xi)$ starts with 3 ``0''.  Here again,
the cascade rule between \eqref{equ2-ordre-shift} and
\eqref{equ3-ordre-shift} yields that every even unknown digit is
``0''.

To complete the proof of the proposition, we have to deal with the case $a=1$. In that case, the assumption
``the second unknown digit in \eqref{equ2-ordre-shift} in 1'' yields to
\begin{eqnarray*}
H(\ul x)&=&(\underbrace{0,1,\ldots,0,1,0,1}_{2n_{0}\ terms},\stackrel{{\downarrow a}}{0},1,x_{n_{0}+2},\ldots),\\
H\circ\s(\ul x)&=&(\underbrace{0,1,\ldots,0,1}_{2n_{0}-2\ terms},\stackrel{{\downarrow a}}{0},1,x_{n_{0}+2},\ldots).
\end{eqnarray*}
Hence, the unique possibility to respect the increasing property for
$H$ would be to  alternate  ``0'' and ``1'' for the tail of $\ul x$.
But even in that case, this will be in contradiction with
\eqref{equ1-ordre-shift}. This finishes the proof.
\end{proof}

The conclusion is that each renormalization operator has to be of
the form: take a fixed $a\in \mathbb{N}$, then given
$V:\{0,1\}^\mathbb{N}\to \mathbb{R}$, for any
$\,x:\,=\,\displaystyle
(\underbrace{0,...,0}_{c_1},\underbrace{1,...,1}_{c_2}\underbrace{0,...,0}_{c_3},1,...),$

\noindent we set

${\mathcal R}(V)(x)=
V((\underbrace{0,...,0}_{2c_1},\underbrace{1,...,1}_{c_2}\underbrace{0,...,0}_{c_3},1,...))+
V((\underbrace{0,...,0}_{2c_1+a},\underbrace{1,...,1}_{c_2}\underbrace{0,...,0}_{c_3},1,...)).$

%%%%%%%%%%%%%%%%%%%%%%%%%%%%%%%%
%%%%%%%%%%%%%%%%%%%%%%%%%%%%%%%
\section{Dynamics on the interval}\label{sec-dyna-inter}

%%%%%%%%%%%%%%%%%%%%%%%%%%%%
\subsection{Choices of the laminar parameter}
From expression (9) in \cite{CLR}
$$x=\disp\frac1{\disp (c_{1}+1 )\,+\frac1{\disp c_{2}+\frac1{\disp c_{3}+...}}}\,=\,\theta ( \bar{x} ),$$
one gets a change of coordinates  $\theta$ from the shift (where a
point $\bar{x}$ in  $\Sigma$ is denoted by $\bar{x}= \displaystyle
(\underbrace{0,\ldots,0}_{c_1},\underbrace{1,\ldots,1}_{c_2},\underbrace{0,\ldots,0}_{c_3},1,\ldots)\in\{0,1\}^\mathbb{N}$)
to the interval $[0,1]$. This map preserves the lexicographic order.

 Since we  consider in the first place the
 potential, and not the dynamics,
 we point out that each $c_{i}$, for $i\ge 1$,
and $c_{1}+1$ can be replaced by
$$\frac1{(1+\frac1n)^\alpha-1},$$
where $n=c_{1}+1,c_{2},c_{3},\ldots$ and $\alpha=1$.

Now, remembering Remark \ref{rem-gamma}, we  note that each term
$\disp (1+\frac1{c_{i}})^\alpha$ is some $e^{V^*(z_{i})}$, where
$z_{i}$ is in relation with the orbit of the initial point under the
actions of the two coupled and competitive laminar regimes.

The goal in this Section \ref{sec-dyna-inter} is to obtain a more
general change of coordinates of this sort, in order to take care of
the choice of different possibilities of the parameter $\alpha\geq
1$.

%%%%%%%%%%%%%%%%%%%%%%%%%%%%%%
\subsection{Convergence of a new continued fraction expansion}
In this section we define a new type of continued fraction.

Let $\alpha>0$ be a real number. We define $g:(0,\infty) \to
\mathbb{R}$, given by
$$g_\alpha (z)= g(z) = \frac{1}{(1 + \frac{1}{z})^\alpha -1}.$$
For a fixed $\alpha$, and when the meaning  is clear, we omit the
subscribe $\alpha$ in $g_{\alpha}$, in  order to make the formulas
simpler.

We have for every $z\in(0,+\8)$, $\disp
g'(z)=\frac\alpha{z^2}\frac{1}{((1 + \frac{1}{z})^\alpha -1)^2}(1 +
\frac{1}{z})^{\alpha-1}$, hence $g$ is increasing. Moreover,
$\lim_{z\to 0 } g(z)= 0$ and  $\lim_{z\to +\8 } g(z)=+\8$.
Therefore, for any given $y\in(0,\infty)$, there exists a $n_y \in
\mathbb{N}$, such that
\begin{equation}\label{equ1-gauss-alpha}
g(n_y) \leq y < g(n_y+1).
\end{equation}

 Moreover, $\disp g(z)= z^\alpha+o(z^\alpha)$ when $z$ is close to 0, and,
 $\disp g(z)=\frac{z}\alpha-\frac{\alpha-1}{2}+O(\frac1z)$, when $z$ is  close to $+\8$.

\begin{lemma} The map $g_{\alpha}$ is convex for $\alpha>1$, and concave for
$\alpha<1$.\end{lemma}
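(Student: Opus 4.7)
The plan is to compute $g_{\alpha}''$ directly and determine its sign on all of $(0,+\infty)$. I would introduce the substitution $u = 1 + 1/z$, which maps $(0,+\infty)$ bijectively onto $(1,+\infty)$ and linearizes the inner expression: writing $g_{\alpha}(z) = G(u(z))$ with $G(u) = 1/(u^{\alpha}-1)$, $u'(z) = -1/z^{2}$, and $u''(z) = 2/z^{3}$, the chain rule gives $g_{\alpha}''(z) = G''(u)\,u'(z)^{2} + G'(u)\,u''(z)$, a quantity I can expand in closed form in terms of $u$, $z$ and $\alpha$.

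After clearing denominators by factoring out the manifestly positive quantity $\alpha\,u^{\alpha-2}/\bigl(z^{4}(u^{\alpha}-1)^{3}\bigr)$ and invoking the identity $z(u-1) = 1$ to eliminate $z$, I would reduce the sign of $g_{\alpha}''(z)$ to the sign of a single expression,
$$P(u) \;:=\; (\alpha-1)\,u^{\alpha+1} \,-\, (\alpha+1)\,u^{\alpha} \,+\, (\alpha+1)\,u \,-\, (\alpha-1),$$
so that $\mathrm{sign}\bigl(g_{\alpha}''(z)\bigr) = \mathrm{sign}(P(u))$ for every $u > 1$.

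The key observation is then elementary. A direct check gives $P(1)=0$, $P'(1)=0$, and
$$P''(u) \;=\; \alpha(\alpha-1)(\alpha+1)\, u^{\alpha-2}(u-1).$$
For $u > 1$ and $\alpha > 0$, the factors $u^{\alpha-2}(u-1)$ and $\alpha(\alpha+1)$ are strictly positive, so $\mathrm{sign}(P''(u)) = \mathrm{sign}(\alpha-1)$. Integrating twice from $u=1$, where both $P$ and $P'$ vanish, propagates this sign first to $P'$ and then to $P$ on all of $(1,+\infty)$. Hence $g_{\alpha}$ is strictly convex when $\alpha > 1$ and strictly concave when $0 < \alpha < 1$.

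The main obstacle is purely bookkeeping: the second-derivative calculation is routine but sign-sensitive, and I would verify consistency with the degenerate case $\alpha=1$ (where $g_{1}(z) = z$ is affine, and indeed the formula gives $P\equiv 0$) and with $\alpha=2$ (where $g_{2}(z) = z^{2}/(2z+1)$ yields $g_{2}''(z) = 2/(2z+1)^{3} > 0$, matching $P(u) = (u-1)^{3} > 0$). An alternative route would exploit the functional symmetry $g_{\alpha}^{-1} = g_{1/\alpha}$, obtained by solving $y=g_{\alpha}(z)$ explicitly for $z$, which would collapse the two cases to one; but the direct argument above is self-contained and no longer.
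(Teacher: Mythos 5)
Your proof is correct, and it takes a genuinely different route from the paper's. The paper computes $g''$ from the identity $g'(z)=\frac{\alpha}{z^2+z}\bigl(g(z)+g^2(z)\bigr)$ and arrives at $g''(z)=\text{(positive factor)}\cdot\bigl(g(z)-\ell(z)\bigr)$, where $\ell$ is the affine asymptote of $g$ at $+\infty$; it then concludes by a qualitative argument that the graph cannot cross its own asymptote, so the sign of $g''$ is constant. That argument is rather terse (the "easy considerations" step is left to the reader, and the asymptote's constant term as printed, $\frac{\alpha-1}{2}$, should in fact be $\frac{\alpha-1}{2\alpha}$). Your approach replaces this self-referential differential identity with a fully explicit computation: the substitution $u=1+1/z$ reduces the sign of $g''$ to that of $P(u)=(\alpha-1)u^{\alpha+1}-(\alpha+1)u^{\alpha}+(\alpha+1)u-(\alpha-1)$, and the observation $P(1)=P'(1)=0$ together with $P''(u)=\alpha(\alpha-1)(\alpha+1)u^{\alpha-2}(u-1)$ settles the sign by integrating twice from $u=1$. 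I checked the algebra: factoring $\frac{\alpha u^{\alpha-2}}{z^4(u^\alpha-1)^3(u-1)}$ out of $G''(u)u'^2+G'(u)u''$ does leave exactly your $P(u)$, and your two sanity checks ($\alpha=1$ gives $P\equiv 0$, $\alpha=2$ gives $P=(u-1)^3$) are right. What your route buys is a complete, verifiable proof with no appeal to pictures; what the paper's route buys is a structural identity linking $g''$ to the asymptote, which it reuses implicitly when discussing the bounds \eqref{equ1-alpha-zob} and \eqref{equ2-alpha-zob}. Your closing remark that $g_{\alpha}^{-1}=g_{1/\alpha}$ (so the two cases are exchanged by inversion of an increasing function) is also correct and would make a clean one-line reduction of the concave case to the convex one.
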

\begin{proof}
To prove this scholium, first note that $\disp
g'(z)=\frac{\alpha}{z^2+z}\left(g(z)+g^2(z)\right)$. This yields
$$g''(z)=-\alpha\frac{2z+1}{(z^2+z)^2}\left(g(z)+g^2(z)\right)+\frac\alpha{z^2+z}\left(g'(z)+2g'(z)g(z)\right).$$
If we replace in this last expression the value of $g'(z)$ in function of $z$ and $g(z)$, we get
$$g''(z)=\alpha^2\frac{g(z)+g^2(z)}{(z^2+z)^2}\left(g(z)-\left(\frac{z}{\alpha}-\frac{\alpha-1}2\right)\right).$$
Note that $\frac{z}{\alpha}-\frac{\alpha-1}2$ is the asymptote of
$g$ close to $+\8$. Then, the convexity of the map depends on the
position of the graph with respect to the asymptote. It's convex
when the graph is above the asymptote, and it's concave when the
graph is below the asymptote. Now, recall that a convex map has a
non-decreasing derivative, and a concave map has a non-increasing
derivative. Therefore, easy considerations on the relative position
of the graph with respect to the asymptote prove that the graph
cannot cross the asymptote. Hence the map is convex for $\alpha>1$,
and concave for $\alpha<1$.
\end{proof}

Note that  $g_{\alpha}(1)= \frac{1}{2^\alpha -1}$. Therefore,
$g_{\alpha}(1)<1$, for $\alpha>1$, and $g_{\beta}(1)>1$, for
$\beta<1$.
\begin{lemma}\label{lem-cont-frac}
Let $(a_k)_{k\in\N}$ be a sequence of real numbers such that
$a_{0}=0$, each $a_{2k+1}$ is larger than 1, and all the even terms
$a_{2k}$, $k>0$, are positive and uniformly bounded away from zero.
Then,
the sequence of real numbers $(r_{k})$ defined by
$$r_{k}=\disp\frac1{\disp a_{1}+\frac1{\disp a_{2}+\frac1{\ddots+\disp\frac1{\disp a_{k}}}}},$$
converges to a real number denoted by $[0,a_{1},a_{2},a_{3},\ldots]$, and we have
$$[0,a_{1},a_{2},a_{3},\ldots]= \disp\frac1{\disp a_{1}+\frac1{\disp a_{2}+\frac1{\ddots+\disp\frac1{\disp a_{k}+\frac1{\ddots}}}}}.$$
\end{lemma}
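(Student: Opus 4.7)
The proof goes through the classical theory of continued fraction convergents, adapted to this non-integer setting. The plan is to encode each truncated expression $r_{k}$ as a ratio $p_{k}/q_{k}$ built from the standard two-term recurrences
$$p_{k}=a_{k}p_{k-1}+p_{k-2},\qquad q_{k}=a_{k}q_{k-1}+q_{k-2},$$
with the usual initial data $p_{-1}=1$, $p_{0}=0$, $q_{-1}=0$, $q_{0}=1$; then, by a routine induction, to verify both that $r_{k}=p_{k}/q_{k}$ and the determinantal identity $p_{k}q_{k-1}-p_{k-1}q_{k}=(-1)^{k-1}$; and finally, after dividing, to obtain
$$r_{k}-r_{k-1}=\frac{(-1)^{k-1}}{q_{k}q_{k-1}}.$$
Convergence of $(r_{k})$ is thereby reduced to the convergence of the alternating telescoping series $\sum(-1)^{k-1}/(q_{k}q_{k-1})$.

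The crux of the argument is a geometric lower bound for $(q_{k})$. Let $c\in(0,1]$ be a uniform lower bound for the even-indexed subsequence $(a_{2k})_{k\ge 1}$. A single step of the recurrence does not suffice, since no uniform lower bound is imposed on $a_{2k+1}-1$; one must chain two consecutive steps of opposite parity. Using $a_{2k+1}>1$ in an odd step and $a_{2k}\ge c$ in an even step, two applications of the recurrence yield the estimate $q_{k+2}\ge (1+c)\,q_{k}$ for every $k\ge 0$. Iterating, both the even-indexed and odd-indexed subsequences of $(q_{k})$ grow at least like $(1+c)^{k/2}$, so $q_{k}q_{k-1}$ diverges at least geometrically. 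In particular the alternating series above converges absolutely, $(r_{k})$ is Cauchy in $\mathbb{R}$, and its limit is by definition what the symbol $[0,a_{1},a_{2},a_{3},\ldots]$ stands for; the nested-fraction formula displayed in the statement is then merely the unwinding of this limit.

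The only genuinely delicate point is the parity-chained growth estimate. Under a standard continued-fraction hypothesis such as $a_{k}\ge 1$ for every $k$, one would immediately obtain the Fibonacci lower bound $q_{k}\ge F_{k}$; here the even-indexed coefficients $a_{2k}$ may be arbitrarily small, so that bound is unavailable, and likewise the odd-indexed ones come with only the strict inequality $a_{2k+1}>1$ rather than a uniform gap above $1$. Combining the two halves of the hypothesis over one full period is exactly what recovers a uniform multiplicative gain $1+c$ every two steps; dropping either half would in general allow the tail of the continued fraction to blow up or fluctuate, and convergence would fail.
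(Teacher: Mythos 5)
Your proof is correct and follows essentially the same route as the paper: both introduce the standard convergents $p_{k}/q_{k}$ via the two-term recurrences, use the determinant identity to control $r_{k}-r_{k-1}=\pm 1/(q_{k}q_{k-1})$, and obtain a lower bound on $q_{k}$ by pairing an odd-indexed step ($a_{2k+1}>1$) with an even-indexed one ($a_{2k}\ge c$). The only differences are cosmetic: you chain two steps to get the geometric bound $q_{k+2}\ge(1+c)q_{k}$ and conclude by absolute convergence of the telescoping series, whereas the paper settles for the linear bounds $q_{2k+1}\ge k$ and $q_{2k}\ge A\,k$ and concludes by observing that the even and odd convergents form mutually adjacent monotone sequences.
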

\begin{proof}
Let $(a_k)_{k\in\N}$ be as in the assumptions. We define two new sequences  $(p_k)_{k\in\N}$
and $(q_k)_{k\in\N}$, by induction:
$$p_0=0,\ p_1=1,\ q_0=1,\ q_1=a_1$$
$$\forall k\in\N,\ p_{k+2}=a_{k+2}p_{k+1}+p_k,\ q_{k+2}=a_{k+2}q_{k+1}+q_k.$$
It's easy to see, by induction, that for every $k>0$, $q_{k}\ge1$.
Using $a_{2k+1}\ge 1$, we easily get  $q_{2k+1}\ge k$, and then
$q_{2k}\ge A.k$, where $A$ is a positive lower bound for all the
$a_{2j}$'s. Therefore,  $q_{k}$ goes to $+\8$ as $k$ increases to
$+\8$.

If we set  $u_k=p_{k+1}q_{k}-p_{k}q_{k+1}$, then $u_{k+1}=-u_{k}$
for every $k$. We claim that $r_k=\disp\frac{p_k}{q_k}$. Then, the
two subsequences $(r_{2k})$ and $(r_{2k+1})$ are mutually adjacent
and converge to the same limit. We leave to the reader to check that
the even sequence $(r_{2k})$ increases and the odd sequence
$(r_{2k+1})$ decreases.
\end{proof}

 We now consider real numbers, $\alpha$ in $[1,+\8[$, $\beta$ in $]0,1]$,  and  the natural number $a\geq 0$.  Given $\bar{x}=  \displaystyle
(\underbrace{0,\ldots,0}_{n_0},\underbrace{1,\ldots,1}_{n_1},\underbrace{0,\ldots,0}_{n_2},1,\ldots)\in \Sigma=\{0,1\}^\mathbb{N}$
we define a real number in $[0,2^\beta-1]$ in the following way:

{\footnotesize
$$ \theta_{\alpha,\beta,a} (\bar{x})= \displaystyle{\frac{      1}{ \disp   \frac{(n_0+1)^\beta}{(n_0+2)^\beta - (n_0+1)^\beta}
+ \frac{1}{ \disp\frac{(n_1+a)^\alpha}{(n_1+a+1)^\alpha -
(n_1+a)^\alpha} + \frac{1}{\disp \frac{n_2^\beta}{(n_2+1)^\beta -
n_2^\beta} +       \frac{1}{\disp
\frac{(n_3+a)^\alpha}{(n_3+a+1)^\alpha - (n_3+a)^\alpha } +... } }
}}}
$$}
We effectively claim (and let the reader check) that the sequence
defined by $a_{2k}=g_{\alpha}(n_{2k-1}+a)$ and
$a_{2k+1}=g_{\beta}(n_{2k})$ satisfies the properties of Lemma
\ref{lem-cont-frac}. Therefore the real number
$[0,a_{1},a_{2},\ldots]$ is well-defined.

We now claim that $\theta_{\alpha,\beta,a}(\bar x)$ belongs to
$[0,2^\beta-1]$. Indeed, the odd subsequence $(r_{2k+1})$ decreases
and the even subsequence $(r_{2k})$ increases. To minimize the value
of $\theta_{\alpha,\beta,a}(\bar x)$, it is necessary and sufficient
to maximize $n_{0}$. On the other hand, to maximize the value of
$\theta_{\alpha,\beta,a}(\bar x)$, it's necessary and sufficient to
minimize $n_{0}$ and to maximize $n_{1}$. Therefore, for every $\bar
x$,
$$0=\theta_{\alpha,\beta,a}(\bar 0^\8)\leq \theta_{\alpha,\beta,a}(\bar x)\leq \theta_{\alpha,\beta,a}(\bar 1^\8)=2^\beta-1.$$

\begin{remark}
The number $a$ does not need to be in $\N$ to define
$\theta_{\alpha,\beta,a}$, but in $\R^+$. This restriction will be
explained later. Note also that $\theta$ is not an homeomorphism.
\end{remark}

%%%%%%%%%%%%%%%%%%%%%%%%%%%%%%
\subsection{Identification of the Image Interval}
In this subsection, we study a one-parameter family of Gauss-like transformations. The goal is to prove that for good parameters, the image $\theta_{\alpha,\beta,a}(\S)$ is the interval $[0,2^\beta-1]$.

\paragraph{{\em The case $\alpha>1$}}
Convexity  and the existence of the asymptote yield, for every $n\ge
1$,

\begin{equation}\label{equ1-alpha-zob}
0<\, g_{\alpha}(n+1) - g_{\alpha}(n) \,<\,
\sup_{j>0}\left\{g_{\alpha}(j+1) - g_{\alpha}(j)\right\}=1/\alpha.
\end{equation}

For a positive  $y$, we set $\disp 0\leq r_{\alpha}(y) = y -
g(n_y)\leq \frac{1}{\alpha}$.

If $\alpha=1$, then $2^\alpha-1=1$.  In this case,  $g(z)=z$ (for
all $z$). Therefore, $g(n)=n$, and $r(x)= \frac{1}{x} - [
\frac{1}{x} ]$ is the usual fractional  part of $1/x$.

For the  case, $\alpha > 1$, we  consider the new Gauss-like map $
\phi_\alpha :(0,2^\alpha-1)\to [0,1]$ given by
$$\phi_\alpha (x) =     \frac{1}{x}   - g_{\alpha}\left(\left[ \frac{1}{(x+1)^\frac{1}{\alpha} -1} \right]\right).$$

\paragraph{{\em The case $\beta<1$}}
We set
$$\phi_\beta (x) =     \frac{1}{x}   - g_{\beta}\left(\left[ \frac{1}{(x+1)^\frac{1}{\beta} -1} \right]\right).$$
Note that concavity and existence of the asymptote yield, for every
$n\geq 1$,

\begin{equation}\label{equ2-alpha-zob}
\frac1\beta<\, g_{\alpha}(n+1) - g_{\alpha}(n) \,\le\,
\sup_{j>0}\left\{g_{\alpha}(j+1) -
g_{\alpha}(j)\right\}=g_{\beta}(2)-g_{\beta}(1).
\end{equation}

We now assume that $\alpha$, $\beta$ and $a$, satisfy
 \begin{subeqnarray}\label{toto}
\frac{1}{\left(\frac32\right)^\beta-1}-\frac1{2^\beta-1}&=&(1+\frac1{a+1})^\alpha-1.\slabel{equ1-alpha-beta}\\
\frac1\alpha&=&2^\beta-1.\slabel{equ1.2-alpha-beta}
\end{subeqnarray}

This system of conditions is referred as \eqref{toto}. Note that
this yields
 \begin{equation}
\label{equ2-alpha-beta}
\frac1{\disp\frac{1}{\left(\frac32\right)^\beta-1}+0}=\frac1{\disp\frac1{2^\beta-1}+\frac1{\disp\frac1{(1+\frac1{a+1})^\alpha-1}+0}}.
\end{equation}

Note also that if $\alpha=1$, then $\beta=1$. In this case $a=0$.

%Claim:
%$$\frac1\beta\geq \frac1{g_\gamma(2+a)}$$

We first check that conditions \eqref{toto}  are compatible with our
assumptions $\alpha\ge 1$ and $\beta\le 1$.

Note that $\beta\le1$ yields $2^\beta-1\le1$, and, then, we indeed
have $\alpha\ge 1$.

One can ask: which  values $a\in \mathbb{R}$ are possible?

Solving in $a$ (the two equations) as a function of $\beta$, we have to
consider the map
$$\disp \beta \mapsto
a(\beta)+1:=\frac1{\left(\frac1{\frac1{(3/2)^\beta-1}-\frac1{2^\beta-1}}+1\right)^{2^\beta-1}-1}.$$

\begin{lemma} The map

$$x\,\to\,
\frac1{\left(\frac1{\frac1{(3/2)^x-1}-\frac1{2^x-1}}+1\right)^{2^x-1}-1}$$

is  a decreasing bijection
from $]0,1]$ onto $[1,+\8]$.

\end{lemma}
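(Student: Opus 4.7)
The plan is to prove three properties of the map
\[
F(x) := \frac{1}{\bigl(1 + 1/D(x)\bigr)^{2^x - 1} - 1}, \qquad \text{with}\quad D(x) := \frac{1}{(3/2)^x - 1} - \frac{1}{2^x - 1},
\]
on $(0, 1]$: continuity, the boundary values $F(1) = 1$ and $\lim_{x \to 0^+} F(x) = +\infty$, and strict monotonicity. Combined with the intermediate value theorem, these will yield the claimed decreasing bijection onto $[1, +\infty]$.

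Direct substitution at $x = 1$ will give $D(1) = 2 - 1 = 1$ and hence $F(1) = 1/(2^1 - 1) = 1$. For $x \to 0^+$, the Taylor expansions $(3/2)^x - 1 \sim x \ln(3/2)$ and $2^x - 1 \sim x \ln 2$ yield $D(x) \sim c/x$ with $c := 1/\ln(3/2) - 1/\ln 2 > 0$, so $1 + 1/D(x) = 1 + x/c + O(x^2)$ and $(2^x - 1) \ln(1 + 1/D(x)) = O(x^2)$; hence the denominator of $F$ vanishes from above like $x^2$ and $F(x) \to +\infty$. Continuity of $F$ on $(0, 1]$ will follow once one checks that $(3/2)^x - 1 > 0$, $2^x - 1 > 0$, $D(x) > 0$ (using $(3/2)^x < 2^x$), and $1 + 1/D(x) > 1$ throughout the interval.

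The main obstacle is strict monotonicity. My approach is to set $G(x) := \bigl(1 + 1/D(x)\bigr)^{2^x - 1}$, so that $F = 1/(G - 1)$, and reduce the problem to showing that $G$ is strictly increasing on $(0,1]$. Taking logarithms, $\ln G(x) = (2^x - 1) \cdot \ln(1 + 1/D(x))$ is a product of two positive continuous functions on $(0, 1]$. Since $2^x - 1$ is positive and strictly increasing, the product will be strictly increasing as soon as $\ln(1 + 1/D(x))$ is too, which in turn reduces to showing that $D$ is strictly decreasing on $(0, 1]$.

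This last monotonicity is the technical heart of the proof. Differentiating gives $D'(x) = \phi(2, x) - \phi(3/2, x)$ with $\phi(a, x) := a^x \ln(a)/(a^x - 1)^2$, so it is enough to show that $a \mapsto \phi(a, x)$ is strictly decreasing on $(1, +\infty)$ for every fixed $x > 0$. Substituting $a = e^s$ and using the identity $e^{xs}/(e^{xs} - 1)^2 = 1/\bigl(4\sinh^2(xs/2)\bigr)$, this rewrites as strict monotonicity of $s \mapsto s/\sinh^2(xs/2)$ on $s > 0$; differentiating that will reduce it, in turn, to the elementary inequality $\tanh(u) < u$ for $u > 0$ applied at $u = xs/2$. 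This gives $D'(x) < 0$ on $(0, 1]$, and the three-step argument above then completes the proof.
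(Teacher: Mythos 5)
Your proof is correct, and its top-level structure coincides with the paper's: both reduce the lemma to the observation that the base $1+1/D(x)$ exceeds $1$ and increases while the exponent $2^x-1$ is positive and increases, which in turn reduces everything to showing that $D(x)=\frac{1}{(3/2)^x-1}-\frac{1}{2^x-1}$ is decreasing. Where you genuinely diverge is in proving that last fact. The paper writes $D'<0$ as $\frac{f'}{f^2}>\frac{g'}{g^2}$ for $f=(3/2)^x-1$, $g=2^x-1$, cross-multiplies to $v(x):=\log(3/2)\,(2^x-2+2^{-x})>\log 2\,((3/2)^x-2+(3/2)^{-x})=:u(x)$, and concludes from $v(0)=u(0)=0$ together with $v'(x)>u'(x)$ for $x>0$ --- a comparison in the variable $x$ for the two specific bases $3/2$ and $2$. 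You instead fix $x$ and prove that $a\mapsto a^x\ln a/(a^x-1)^2$ is decreasing on all of $(1,+\infty)$, via $a=e^s$ and the identity $e^{xs}/(e^{xs}-1)^2=1/(4\sinh^2(xs/2))$; the derivative computation actually requires $\tanh u<2u$ rather than $\tanh u<u$, but the inequality you cite is stronger, so this is only a cosmetic imprecision. The two arguments are essentially dual --- the paper's inequality $v>u$ is precisely the statement that $s\mapsto \sinh^2(xs/2)/s$ is increasing, evaluated at $s=\ln(3/2)$ and $s=\ln 2$ --- but yours is more general (it works for any pair of bases) and is packaged as a clean one-variable monotonicity. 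You also explicitly verify the boundary values $F(1)=1$ and $F(x)\to+\infty$ as $x\to 0^+$ and invoke the intermediate value theorem for surjectivity; the paper's proof only establishes monotonicity and omits this part, which is actually needed for the claim that the map is a bijection onto $[1,+\infty]$, so your treatment is the more complete one.
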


\begin{proof} Once $(2^x -1)$ is increasing, and
$\frac1{\frac1{(3/2)^x-1}-\frac1{2^x-1}}+1$ is bigger than $1$; all
we have to prove is  that
$\frac1{\frac1{(3/2)^x-1}-\frac1{2^x-1}}+1$ is also
increasing.

Given $f:[0,1]\to \mathbb{R}$ and $f:[0,1]\to \mathbb{R}$, one can
ask when

$\frac{1}{f(x)} - \frac{1}{g(x)}$ is decreasing? Taking
derivative we get the condition
$$\frac{f'(x)}{f(x)^2}\,  \,>\, \frac{g'(x)}{g(x)^2}\, $$

Suppose $f(x)= (\, (3/2)^x -1)$, then the first term is
$$ \frac{ \log (3/2) \, (3/2)^x}{\, ((3/2)^x -1)^2}.$$

Suppose $g(x)= (\, 2^x -1)$, then the second term is
$$ \frac{ \log 2 \, 2^x}{\, (2^x -1)^2}.$$

We claim that, for all $x\in [0,1]$,
$$ \frac{ \log (3/2) \, }{\, ((3/2)^x -1)\, (1- (3/2)^{-x})}= \frac{ \log (3/2) \, (3/2)^x}{\, ((3/2)^x -1)^2} >
\frac{ \log 2 \, 2^x}{\, (2^x -1)^2} =\frac{ \log 2 \,}{\, (2^x
-1)\, (1- 2^{-x})}.$$

The above means, for  all $x\in [0,1]$,

$$ v(x)=\log(3/2) \, (2^x - 2 + 2^{-x}) > \log 2 \,(\, (3/2) ^x - 2 +
(3/2)^{-x})=u(x).$$

Note that $v(0)=0=u(0).$

As $$v'(x) =  \log(3/2) \,\log 2\, 2^x -  \log(3/2) \,\log 2\,
2^{-x},$$ and
$$u'(x) =  \log(3/2) \,\log 2\, (3/2)^x -  \log(3/2)
\,\log 2\, (3/2)^{-x}.$$

For any non-negative $x$, we have $v'(x)> u'(x)$ and the claim
follows. Therefore,

$$x\mapsto  \frac{1}{((3/2) ^x -1)}\, -  \frac{1}{(2^x-1)} $$
is decreasing on $ [0,1]$. This means that

$$x\mapsto  \frac{1}{\frac{1}{((3/2) ^x -1)}\, -  \frac{1}{(2^x-1)}+1} $$
is  increasing.

Moreover, close to 1 we have $\disp a(x)=(x-1)(2\ln
2-4\ln^2(2)-6\ln\frac32)+O((x-1)^2)$.

Close to 0 we have $\disp
a(x)=\frac{\frac1{\ln\frac32}-\frac1{\ln2}}{x^2\ln2}+C.\frac1{x}+O(1)$.

\end{proof}

From the lemma above we get the property that each positive integer
value of $a$ can be reached. In this way, several renormalization
operators, with different values $a\in \mathbb{N}$, can be considered
in our future reasoning. For each such value $a$, we have the
corresponding values $\alpha_a$ and $\beta_a$. We point out,
however, that it also has meaning to consider real values of $a$
(any positive real is possible) in several of our results (which are
not related to the renormalization operator for the shift)

In the following, we however prefer to keep $\beta$ as parameter, because it gives the length of the interval where the dynamic works :

\begin{proposition}\label{prop-theta-onto}
With our new conditions on $\alpha$, $\beta$ and $a$, the map
$\theta_{\alpha,\beta,a}$, from $\S_{2}^+$ into the interval
$[0,2^\beta-1]$ is onto.
\end{proposition}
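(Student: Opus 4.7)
The plan is to construct, for each $y \in [0, 2^\beta - 1]$, a preimage $\bar x \in \Sigma_{2}^+$ by iteratively extracting block lengths via the two Gauss-like maps $\phi_\alpha$ and $\phi_\beta$ defined earlier. The boundary values $y = 0$ and $y = 2^\beta - 1$ are handled directly by $\bar x = \bar 0^\infty$ and $\bar x = \bar 1^\infty$ respectively, so I would focus on $y \in (0, 2^\beta - 1)$.

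First I would detail one \emph{even step}: given $\tilde y \in (0, 2^\beta - 1]$, one has $1/\tilde y \geq 1/(2^\beta - 1) = g_{\beta}(1)$, so there is a unique integer $m \geq 1$ with $g_{\beta}(m) \leq 1/\tilde y < g_{\beta}(m+1)$; one sets $n := m - 1 \geq 0$ and $\phi_{\beta}(\tilde y) := 1/\tilde y - g_{\beta}(m)$. The concavity of $g_{\beta}$ (since $\beta \le 1$) gives $g_{\beta}(m+1) - g_{\beta}(m) \leq g_{\beta}(2) - g_{\beta}(1)$, which by condition \eqref{equ1-alpha-beta} equals $1/g_{\alpha}(a+1)$; thus $\phi_{\beta}(\tilde y) \in [0, 1/g_{\alpha}(a+1)]$. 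Analogously, one \emph{odd step}: for $\tilde y \in (0, 1/g_{\alpha}(a+1)]$ one has $1/\tilde y \geq g_{\alpha}(a+1)$, and a unique $m \geq a+1$ satisfies $g_{\alpha}(m) \leq 1/\tilde y < g_{\alpha}(m+1)$, giving $n := m - a \geq 1$ and $\phi_{\alpha}(\tilde y) := 1/\tilde y - g_{\alpha}(m)$. The convexity of $g_{\alpha}$ together with condition \eqref{equ1.2-alpha-beta} bounds $\phi_{\alpha}(\tilde y)$ by $1/\alpha = 2^\beta - 1$, returning to the domain of $\phi_{\beta}$.

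Iterating $y_{2k+1} = \phi_{\beta}(y_{2k})$ and $y_{2k+2} = \phi_{\alpha}(y_{2k+1})$ from $y_0 := y$, I would extract block lengths $n_0 \geq 0$ and $n_i \geq 1$ for $i \geq 1$. If some $y_k$ vanishes, the remaining block is declared of infinite length, so that $\bar x$ terminates in $\bar 0^\infty$ or $\bar 1^\infty$; otherwise the process continues indefinitely. These block lengths define the candidate $\bar x \in \Sigma_{2}^+$, and by construction $y$ equals the $k$-th partial continued fraction of $\theta_{\alpha,\beta,a}(\bar x)$ with the tail remainder $y_k$ reinserted. Lemma \ref{lem-cont-frac} ensures convergence of the resulting infinite continued fraction, while the standard sandwiching by adjacent even and odd truncations forces the limit to equal $y$; this limit is by definition $\theta_{\alpha,\beta,a}(\bar x)$.

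The main obstacle I anticipate is verifying that the alternation between $\phi_{\beta}$ and $\phi_{\alpha}$ really does cycle consistently: condition \eqref{equ1-alpha-beta} is exactly what matches the image of $\phi_{\beta}$ to the domain of $\phi_{\alpha}$, while condition \eqref{equ1.2-alpha-beta} closes the loop back. A careful case analysis of boundary situations (a remainder $y_k$ landing exactly at an endpoint of the valid range, or termination after finitely many steps) is also needed to avoid producing a spurious ``$n_i = 0$ with $i \geq 1$'' that would fail to correspond to a valid block decomposition of an element of $\Sigma_{2}^+$.
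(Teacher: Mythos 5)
Your proposal is correct and follows essentially the same route as the paper's proof: an alternating Gauss-map algorithm that extracts $n_{0}$ via $g_{\beta}$, then $n_{1}$ via $g_{\alpha}$, and so on, with conditions \eqref{equ1-alpha-beta} and \eqref{equ1.2-alpha-beta} guaranteeing that each remainder lands in the domain of the next step. The only difference is that you make explicit the final convergence and sandwiching argument via Lemma \ref{lem-cont-frac}, which the paper leaves implicit.
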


\begin{proof}
For a given $x$ in $[0,2^\beta-1]$, we want to exhibit a sequence
$n_{0},n_{1},n_{2},\ldots$ of integers, possibly equal to $+\8$ (in
that case the sequence stops), all positive, except $n_{0}$ which is
non-negative, such that {\footnotesize
$$ x= \disp\frac{  1}{  \disp \frac{(n_0+1)^\beta}{(n_0+2)^\beta - (n_0+1)^\beta}   + \disp\frac{1}{ \disp\frac{(n_1+a)^\alpha}{(n_1+a+1)^\alpha - (n_1+a)^\alpha} + \disp\frac{1}{\disp\frac{n_2^\beta}{(n_2+1)^\beta - n_2^\beta} +   \disp    \frac{1}{\disp \frac{(n_3+a)^\alpha}{(n_3+a+1)^\alpha - (n_3+a)^\alpha } +\frac{1}\ddots }    } }},
$$}

If such a sequence exists, then $\disp
\frac1x=g_{\beta}(n_{0}+1)+r_{0}(x)$. We may, for instance choose
$n_{0}+1$ as the integer $n_{\frac1x}$ (see \eqref{equ1-gauss-alpha}
but for $g_{\beta}$). In that way, \eqref{equ1-alpha-beta} yields
that  $r_{0}(x)\leq (1+\frac1{a+1})^\alpha-1$.  Moreover, $x\le
2^\beta-1$ yields $\frac1x\ge g_{\beta}(1)$, hence $n_{0}\ge 0$. We
thus have now to iterate this construction by induction. Clearly
$\frac{1}{r_{0}(x)}\ge g_{\alpha}(1+a)$, and we can find some
$n_{1}\geq 1$ such that
$$\frac1{r_{0}(x)}=g_{\alpha}(n_{1}+a)+r_{1}(x).$$
We then have $r_{1}(x)\leq \frac1\alpha=2^\beta-1$, and we can
iterate this process.
\end{proof}

We now explain what are the points with finite fractional expansion. In the construction the process stops if and only if some rest $r_{i}(x)$ equals 0. If $i$ is even, we are dealing with the maps $g_{\beta}$ and $\phi_{\beta}$. The symbolic representation of $x$ is  $\bar{x}=  \displaystyle
(\underbrace{0,\ldots,0}_{n_0},\underbrace{1,\ldots,1}_{n_1},\ldots,\underbrace{0,\ldots,0}_{n_i},1,\ldots,1,\ldots)$, and ultimately equals 1.
Just at the right side of $x$, in $x+0$, points have one zero less in their symbolic representation in $\S$ for the $i^{th}$-block, then one 1 and arbitrarily long string of 0's. Just at the left side of $x$, in $x-0$, points have $n_{i}$ 0's and arbitrarily long string of 1.

If $i$ is odd, we are dealing with maps $g_{alpha}$ and
$\phi_{\beta}$. Things are similar, except we have to exchange left
side with right side, and 0 with 1.

%%%%%%%%%%%%%%%%%%%%%%%

%%%%%%%%%%%%%%%%%%%%%%%%%%%%%%%%%
\subsection{An associated global
transformation on $[0,2^\beta-1]$}

Let $\alpha\ge 1$, $\beta\le1$  and $a$ such that
\eqref{equ1-alpha-beta} and \eqref{equ1.2-alpha-beta} are satisfied.
Given $\bar{x}=  \disp
(\underbrace{0,\ldots,0}_{n_0},\underbrace{1,\ldots,1}_{n_1},\underbrace{0,\ldots,0}_{n_2},1,\ldots)\in
\Sigma$ we set

$$ \hat\theta_{\beta} (\bar{x}):=\theta_{\alpha,\beta,a}(\bar x)= \disp{\frac{      1}{   g_{\beta}(n_{0}+1)+ \frac{1}{ \disp g_{\alpha}(n_{1}+a) + \frac{1}{\disp g_{\beta}(n_{2}) +       \frac{1}{ \disp g_{\alpha}(n_{3}+a)+... }    } }}}
$$

In the above expression, we can assume the possibility $n_0=0.$
If one consider in $ \{0,1\}^\mathbb{N}$ the lexicographic  order, then the global transformation $\hat{\theta}_{\beta}$ is non-decreasing in each cylinder $\bar{0}$ and $\bar{1}$.

Note that $\hat{\theta}_{\beta} ( \bar0^\8)= 0$, and $\hat{\theta}
_{\beta}( \bar1^\8)= 2^\beta-1$.  Moreover, \eqref{equ2-alpha-beta}
yields
$$\hat\theta_{\beta}(01^\8)=\frac1{\disp\frac{1}{\left(\frac32\right)^\beta-1}+0}=\frac1{\disp\frac1{2^\beta-1}+\frac1{\disp\frac1{(1+\frac1{a+1})^\alpha-1}+0}}=\hat\theta_{\beta}(010^{\8})$$

We then define the transformation $f_\beta:[0,2^\beta-1]\to [0,2^\beta-1]$ by
$$ f_{\beta} (x) = [\,\hat{\theta}_{\beta} \circ \sigma \circ \hat{\theta}_{\beta}^{-1}\,] (x).$$

Namely, if

$$ x= \disp \frac{ 1}{    \frac{(n_0+1)^\beta}{(n_0+2)^\beta - (n_0+1)^\beta} + \disp\frac{1}{ \frac{(n_1+a)^\alpha}{(n_1+a+1)^\alpha - (n_1+a)^\alpha} + \disp\frac{1}{ \frac{n_2^\beta}{(n_2+1)^\beta - n_2^\beta} +  \disp     \frac{1}{ \frac{(n_3+a)^\alpha}{(n_3+a+1)^\alpha - (n_3+a)^\alpha } +... }    } }  }  ,
$$

and if  $n_{0}>0$, then
$$ f_\beta (x)=
 \disp\frac{      1}{   \frac{n_0^\beta}{(n_0+1)^\beta - n_0^\beta} +
 \disp\frac{1}{ \frac{(n_1+a)^\alpha}{(n_1+a+1)^\alpha - (n_1+a)^\alpha} + \disp\frac{1}{ \frac{n_2^\beta}{(n_2+1)^\beta - n_2^\beta} +  \disp     \frac{1}{ \frac{(n_3+a)^\alpha}{(n_3+a+1)^\alpha - (n_3+a)^\alpha } +... }    } }}.
$$

If $n_{0}=0$, and if $n_{1}>1$, then
$$ f_\beta (x)=
 \disp\frac{      1}{   \frac1{2^\beta-1} +
 \disp\frac{1}{ \frac{(n_1+a-1)^\alpha}{(n_1+a)^\alpha - (n_1+a-1)^\alpha} + \disp\frac{1}{ \frac{n_2^\beta}{(n_2+1)^\beta - n_2^\beta} +  \disp     \frac{1}{ \frac{(n_3+a)^\alpha}{(n_3+a+1)^\alpha - (n_3+a)^\alpha } +... }    } }}.
$$

If $n_{0}=0$ and $n_{1}=1$, then
$$ f_\beta (x)=
 \disp\frac{      1}{   \frac{n_2^\beta}{(n_2+1)^\beta - n_2^\beta} +
 \disp\frac{1}{ \frac{(n_3+a)^\alpha}{(n_3+a+1)^\alpha - (n_3+a)^\alpha} + \disp\frac{1}{ \frac{n_4^\beta}{(n_4+1)^\beta - n_4^\beta} +  \disp     \frac{1}{ \frac{(n_5+a)^\alpha}{(n_5+a+1)^\alpha - (n_5+a)^\alpha } +... }    } }}.
$$

When $\beta=1$, the transformation $f_\beta$ plays a similar role of
$F$ in \cite{CLR} page 3540.

Note that $f_\beta(0)=0$, and $f_\beta(1)=1$.

\begin{proposition}\label{prop-deriv-fbeta}
The map $f_\beta$ is increasing and differentiable in each interval
$\left[0, {\left(\frac32\right)^\beta-1} \right)$ and
$\left({\left(\frac32\right)^\beta-1},1\right]$.
\end{proposition}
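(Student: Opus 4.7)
The plan is to read each of the three explicit formulas for $f_{\beta}$ given above as a piecewise Möbius transformation: on each stratum of the partition of the interval determined by fixing the leading blocks $n_{0}$ (and, on the $1$-cylinder, also $n_{1}$) of the canonical symbolic expansion $\hat\theta_{\beta}^{-1}(x)$, $f_{\beta}$ is a fractional linear map of $x$, manifestly smooth and strictly increasing on its piece. The remaining work is to check monotonicity across the interfaces between consecutive strata.

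For $x\in[0,(3/2)^{\beta}-1)$, which corresponds to canonical expansions with $n_{0}\ge 1$, I introduce the common continued-fraction tail $Y=1/x-g_{\beta}(n_{0}+1)$ of $x$ and of $f_{\beta}(x)$. The first explicit formula then rewrites as
$$
f_{\beta}(x)=\frac{x}{1-(g_{\beta}(n_{0}+1)-g_{\beta}(n_{0}))\,x}.
$$
On the sub-interval $I_{n}=(1/g_{\beta}(n+2),\,1/g_{\beta}(n+1)]$ where the canonical value $n_{0}=n$ is constant, the inequality $(g_{\beta}(n+1)-g_{\beta}(n))\,x\le 1-g_{\beta}(n)/g_{\beta}(n+1)<1$ keeps the denominator bounded below by $g_{\beta}(n)/g_{\beta}(n+1)>0$, so $f_{\beta}$ is analytic on $I_{n}$ with strictly positive derivative $1/(1-(g_{\beta}(n+1)-g_{\beta}(n))\,x)^{2}$. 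At an interface $x_{0}=1/g_{\beta}(n+1)$ ($n\ge 2$), the left limit equals $1/g_{\beta}(n)$ and the right limit equals $1/(g_{\beta}(n-1)+g_{\beta}(n+1)-g_{\beta}(n))$; the concavity of $g_{\beta}$ for $\beta\le 1$, proved earlier as a scholium, yields $g_{\beta}(n+1)+g_{\beta}(n-1)\le 2g_{\beta}(n)$, so the right limit dominates the left and the interface jump (when $\beta<1$) is upward, preserving monotonicity. For $\beta=1$, $g_{\beta}$ is affine, the jumps disappear, and the map is genuinely continuous.

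An analogous analysis on $((3/2)^{\beta}-1,\,2^{\beta}-1]$ proceeds by stratification according to $n_{1}\ge 1$: both sub-case formulas $n_{1}>1$ and $n_{1}=1$ reduce, after solving for $Z=1/x-g_{\beta}(1)$ and peeling off one further block, to Möbius expressions in $x$ that are analytic with strictly positive derivative on each stratum. The convexity of $g_{\alpha}$ for $\alpha\ge 1$, also established earlier, forces the interface jumps between successive $n_{1}$-strata to be upward, again preserving monotonicity.

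The main delicate step, and the only place where the full system \eqref{toto} is used decisively, is the gluing between the $n_{1}=1$ and $n_{1}>1$ sub-formulas at the common endpoint $\hat\theta_{\beta}(10^{\infty})=(3/2)^{\beta}-1$. The identity \eqref{equ2-alpha-beta}, namely $g_{\beta}(2)=g_{\beta}(1)+1/g_{\alpha}(a+1)$, is exactly the compatibility needed for the two sub-formulas to agree as one-sided limits at that point and for monotonicity to survive across this sub-case transition; without it, the piecewise Möbius data would not assemble into a monotone map on the second interval. This bookkeeping at the $n_{1}=1 / n_{1}>1$ boundary is the main obstacle of the proof.
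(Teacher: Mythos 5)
Your decomposition into M\"obius pieces and your interface bookkeeping are correct as far as they go, but they do not prove the proposition: the statement asserts that $f_\beta$ is \emph{differentiable} on all of $\left[0,(3/2)^\beta-1\right)$, whereas your argument gives differentiability only in the interior of each stratum $I_n$ and then, for $\beta<1$, exhibits a \emph{strict} upward jump of $f_\beta$ at every interface $x_0=1/g_\beta(n+1)$ (strict because $g_\beta$ is strictly concave, so $g_\beta(n-1)+g_\beta(n+1)-g_\beta(n)<g_\beta(n)$). A function with a jump discontinuity is not differentiable there, so your proof in fact contradicts, at countably many points of the interval, the very claim it is meant to establish. ``Increasing'' survives; ``differentiable on the whole interval'' does not.

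The paper's proof runs quite differently: it computes the difference quotient directly from the expansions, namely $\frac{f_\beta(x)-f_\beta(y)}{x-y}=\frac{(u+b)(u'+b')}{(\zeta+b)(\zeta'+b')}$ with $x=1/(u+b)$ and $f_\beta(x)=1/(\zeta+b)$, and asserts that even when the expansion of $y$ jumps (the case $n_1=+\infty$) one still has $\zeta'+b'\to\zeta+b$, i.e.\ that the one-sided limits of $1/f_\beta$ agree; this yields $f_\beta'(x)=f_\beta(x)^2/x^2>0$ everywhere, giving both conclusions at once. The tension between that assertion and your computed jumps traces back to the fact that for $\beta<1$ the $\hat\theta_{\beta}$-images of consecutive cylinders $\{n_0=n\}$ and $\{n_0=n+1\}$ overlap (since $g_\beta(n+2)-g_\beta(n+1)<g_\beta(2)-g_\beta(1)=1/g_\alpha(1+a)$ by strict concavity), so $\hat\theta_{\beta}$ is not injective and $\hat\theta_{\beta}^{-1}$ requires a choice of branch; your greedy stratification is one such choice and it produces exactly the discontinuous version. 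To complete a proof of the statement you must specify which branch defines $f_\beta$ and show that the resulting map has matching one-sided limits at the interfaces --- the surjectivity of $\hat\theta_{\beta}$ invoked by the paper excludes gaps between strata but not overlaps, so this step genuinely requires an argument (or leads to a counterexample for $\beta<1$). Finally, the ``main delicate step'' you single out, the gluing at $(3/2)^\beta-1$, is not actually needed: that point is excluded from both intervals in the statement, and the map is permitted to be discontinuous there.
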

\begin{proof}
First consider  $x\in \left[0, {\left(\frac32\right)^\beta-1}
\right)$. If $x$ does not have a finite continuous expansion, then $
f_\beta^{'} (x) = \frac{f_\beta(x)^2}{x^2}.$

Indeed, suppose $x=\hat{\theta}_\beta(\bar{x})=\disp
(\underbrace{0,\ldots,0}_{n_0},\underbrace{1,\ldots,1}_{n_1},\underbrace{0,\ldots,0}_{n_2},1,\ldots)\in
\Sigma$, and take $y=\hat{\theta}_\beta(\bar{y})$, with $y$ close to
$x$ in $[0,\left(\frac32\right)^\beta-1)$. Note that $n_{0}>0$.

As $a$ is an integer, discontinuities of the fractional expansion
only appear for a fixed countable set of points (whatever the
``level'' they appear). Assume $x$ is not such a point. Then,
$\bar{x}$ and $\bar{y}$ are close in $\Sigma$. Using the above
expansion suppose $x= \frac{1}{u+b}$ and $y=\frac{1}{u +b^{'}}$,
where $u=g_{\beta}(n_{0}+1)$.

Therefore, $f_\beta(x)= \frac{1}{\zeta+b}$ and
$f_\beta(y)=\frac{1}{\zeta +b^{'}}$. Finally,
$$ \frac{f_\beta(x) - f_\beta(y) }{x-y}= \frac{   \frac{1}{\zeta+b} -  \frac{1}{\zeta +b^{'}}      }{     \frac{1}{u+b}-        \frac{1}{u +b^{'}}}  = \frac{(u+b)\, (u + b^{'} )     }{(\zeta + b) \, (\zeta + b^{'})     }.$$

When $y\to x$ we get $b^{ '} \to b.$ Then, we get the expression
$$ f_\beta^{'} (x) = \frac{f_\beta(x)^2}{x^2}.$$

We claim that this also holds if $x$ is a discontinuity point, but
if $y$ goes to $x+0$ or $x-0$ (depending if the tail of $\bar x$ is
only 0 or only 1). In that case we just have a left or right
derivative.

Let us now study the other side. Note that we only consider the case
$n_{0}>1$ because the map is not continuous in $\disp
{\left(\frac32\right)^\beta-1}$. We have
$$ \frac{f_\beta(x) - f_\beta(y) }{x-y}= \frac{   \frac{1}{\zeta+b} -  \frac{1}{\zeta' +b^{'}}      }{     \frac{1}{u+b}-        \frac{1}{u' +b^{'}}}  = \frac{(u+b)\, (u' + b^{'} )     }{(\zeta + b) \, (\zeta' + b^{'})     },$$
where $u'\neq u$ and $\zeta'\neq\zeta$ if and only if $n_{1}=+\8$.
When $y\rightarrow x$, we do not claim that $u'\rightarrow u$ and
$b'\rightarrow b$, but  the fact that $\hat\theta_{\beta}$ is onto
yields that in $\R$, $u'+b'\rightarrow u+b$ and
$\zeta'+b'\rightarrow \zeta+b$.

Therefore, for any $x$ in $\left[0, {\left(\frac32\right)^\beta-1}
\right)$,
$$f_\beta^{'} (x) = \frac{f_\beta(x)^2}{x^2}.$$

The case $x$ in $\left] {\left(\frac32\right)^\beta-1},1 \right]$ is
similar. Note that this interval is also
$$\left]\frac1{\disp\frac1{2^\beta-1}+{{(1+\frac1{a+1})^\alpha-1}}},1\right],$$
and we have to exchange the variable $x$ with $1-x$.
\end{proof}

An easy calculation shows that on $\left[0,
{\left(\frac32\right)^\beta-1} \right)$, $f_\beta (x)$ is on  the
form $f_\beta(x) = \frac{x}{1+ cx}$. To find $c$ we use the boundary
condition $f_\beta( \left(\frac32\right)^\beta-1)=2^\beta-1$. Then
\eqref{equ1-alpha-beta} yields
$$-c=\frac{1}{\left(\frac32\right)^\beta-1}-\frac1{2^\beta-1}=(1+\frac1{a+1})^\alpha-1.$$

When $a=0$ we get $c=-1$.

In a similar way, we can find a value $d$ such that $f_\beta(x) =
\frac{d\,+ \,(1-d) \,x}{1\,+\, d\, -\,d\,x}$, for $x\in \left]
{\left(\frac32\right)^\beta-1},1 \right]$.

In this way we obtain $f_\beta$ as a new kind of
Manneville-Pomeau-like map. The values of $c$ and $d$ depend on $a$,
$\alpha $ and $\beta$. Note that the jet in each of the two fixed
points of this map is associated to a different parameter, namely,
depend respectively on $c $ and $d$.

\begin{remark}\label{ }
As we said in the introduction, the condition $a\in\N$ does not seem
to be necessary. We can define $\theta_{\alpha,\beta,a}$ with
$a\in\R_{+}$, which give us continuous $\alpha(a)$ and $\beta(a)$.
We however recall that $a\in\N$ was needed to define the
renormalization in the shift.
\end{remark}

\end{document}